\documentclass[preprint,12pt]{elsarticle}

\usepackage[T1]{fontenc}
\usepackage{lmodern}
\usepackage{microtype}
\usepackage{amsmath,amssymb,amsfonts,amsthm,mathtools}
\usepackage{bm}
\usepackage{enumitem}
\usepackage{booktabs}
\usepackage[margin=1.15in]{geometry}
\usepackage{xcolor}
\usepackage{hyperref}
\usepackage[nameinlink,noabbrev]{cleveref}
\hypersetup{
	pdfauthor={Xuelong Gu and Qi Wang},
	pdftitle={A generalized skew-gradient embedding framework for thermodynamically consistent systems},
	colorlinks=false,
	linkbordercolor={1 0 0},
	citebordercolor={0 1 0},
	urlbordercolor={0 0 1},
	pdfborder={0 0 1}
}

\makeatletter
\def\ps@pprintTitle{%
	\let\@oddhead\@empty
	\let\@evenhead\@empty
	\let\@oddfoot\@empty
	\let\@evenfoot\@oddfoot}
\makeatother

\newtheorem{theorem}{Theorem}[section]
\newtheorem{proposition}[theorem]{Proposition}

\newtheorem{definition}[theorem]{Definition}
\newtheorem{remark}[theorem]{Remark}

\newcommand{\R}{\mathbb R}

\newcommand{\calC}{\mathcal C}

\newcommand{\Om}{\Omega}
\newcommand{\dd}{\mathrm d}
\newcommand{\ip}[2]{\left\langle #1,#2\right\rangle}
\newcommand{\norm}[1]{\left\|#1\right\|}
\newcommand{\abs}[1]{\left|#1\right|}
\newcommand{\Span}{\operatorname{span}}
\newcommand{\diver}{\operatorname{div}}
\newcommand{\grad}{\nabla}
\newcommand{\bfu}{\bm u}
\newcommand{\bfv}{\bm v}
\newcommand{\bfw}{\bm w}

\begin{document}

\begin{frontmatter}

	\title{Generalized skew-gradient embedding for thermodynamically consistent systems}

	\author[usc]{Xuelong Gu\fnref{funding}}
	\author[usc]{Qi Wang\corref{cor1}\fnref{funding}}
	\ead{QWANG@math.sc.edu}
	\cortext[cor1]{Corresponding author.}
	\fntext[funding]{X.G. is supported by NSF award OIA-2242812. Q.W. is partially supported by NSF awards DMS-2038080 and OIA-2242812, and DOE award DE-SC0025229.}
	\address[usc]{Department of Mathematics, University of South Carolina, Columbia, SC 29208, USA}

	\begin{abstract}
		GENERIC describes thermodynamic evolution through coupled reversible and irreversible operators subject to energy--entropy degeneracy conditions. The skew-gradient embedding (SGE) framework~\cite{GuWangSGE2025} embeds a zero-energy contribution term in a rank-two skew-symmetric matrix, rewriting the system as a generalized gradient flow. This matrix may be evaluated at previous time levels without losing skew-symmetry or the discrete energy law, and the resulting explicit treatment often decouples multiphysics variables. The rank-two representation is not unique because the dynamics fixes the action of the matrix only along the thermodynamic force. We characterize the resulting affine family of admissible two-forms and call them generalized skew-gradient embeddings (GSGE). Weighted least squares selects a unique minimum-Hilbert--Schmidt gauge and recovers SGE in the native metric. Regularization keeps the gauge well-defined when the thermodynamic force is small or vanishes, while a projection-based construction yields gauges that preserve multiple prescribed invariants simultaneously. For full GENERIC systems, an invariant-preserving selection within this family retains the entropy-production law and total-energy conservation simultaneously. For isothermal systems, we give a rank-two Jacobi criterion for the gauge to define a Poisson structure, which permits GENERIC integrators in the operator sense. We illustrate the framework with two fluid systems. For the incompressible Navier--Stokes equations, a compatible MAC spatial discretization satisfies this discrete Jacobi criterion; combined with the implicit midpoint rule in time, it yields a fully discrete rank-two GENERIC representation with an exact discrete free-energy law. For the Cahn--Hilliard--Navier--Stokes system, a regularized GSGE--BDF2 scheme preserves mass, dissipates the discrete free energy unconditionally, and permits a decoupled implementation.
	\end{abstract}

	\begin{keyword}
		GENERIC \sep generalized Onsager principle \sep skew-gradient embedding \sep structure-preserving discretization
	\end{keyword}

\end{frontmatter}

\section{Introduction}
\label{sec:introduction}

Thermodynamically consistent models in classical electrodynamics, fluid and solid mechanics, quantum mechanics, complex fluids, phase-field hydrodynamics, and statistical physics often arise from conservation laws coupled with constitutive relations \cite{Morrison-1986-Joined,port,Wang2021GOP,Hong-2024-JCP,Jiang-2024-yield}. The GENERIC formalism separates their reversible and irreversible parts as follows \cite{GrmelaOttinger1997a,OttingerGrmela1997b,OttingerBook2005,Ottinger-2018-GENERIC-Integrators,Bloch-2024-Metriplectic}:
\begin{equation}
	\label{eq:intro-onsager-zec}
	\partial_t\Phi=L(\Phi)\,\grad E(\Phi)+M(\Phi)\,\grad S(\Phi).
\end{equation}
Here $\Phi$ is the collection of thermodynamic variables, $E$ is the total energy, and $S$ is the entropy. At each state, $L(\Phi),M(\Phi):H\to H$. The operator $L$ is Poisson, while $M$ is symmetric positive semidefinite. They satisfy the GENERIC degeneracy conditions
\begin{equation}
	\label{eq:generic-degeneracy}
	L(\Phi)\,\grad S(\Phi)=0,
	\quad
	M(\Phi)\,\grad E(\Phi)=0.
\end{equation}
The first condition states that reversible motion preserves entropy, and the second states that irreversible motion preserves total energy. Skew-symmetry and \eqref{eq:generic-degeneracy} give
\begin{equation}
	\label{eq:intro-energy-law}
	\begin{aligned}
		\tfrac{\dd}{\dd t}E(\Phi(t))
		 & =\ip{\grad E}{L\,\grad E}+\ip{\grad E}{M\,\grad S}=0,\\
		\tfrac{\dd}{\dd t}S(\Phi(t))
		 & =\ip{\grad S}{L\,\grad E}+\ip{\grad S}{M\,\grad S}
		=\ip{\grad S}{M\,\grad S}\ge0.
	\end{aligned}
\end{equation}
At a fixed temperature $\theta>0$, set $F=E-\theta S$ \cite{Onsager1931a,Onsager1931b,Wang2021GOP}. The degeneracy conditions imply $L\,\grad F=L\,\grad E$ and $M\,\grad S=-\tfrac{1}{\theta}M\,\grad F$. Hence \eqref{eq:intro-onsager-zec} reduces to
\begin{equation}
	\label{eq:isothermal-reduction}
	\partial_t\Phi=-\tfrac{1}{\theta}M(\Phi)\,\grad F(\Phi)+L(\Phi)\,\grad F(\Phi),
\end{equation}
and $\tfrac{\dd}{\dd t}F=-\tfrac{1}{\theta}\ip{\grad F}{M\,\grad F}\le0$.

At the continuous level, the reversible term is $L\,\grad E$ and satisfies $\ip{\grad S}{L\,\grad E}=0$. SGE \cite{GuWangSGE2025} exploits this orthogonality by embedding the reversible field in the rank-two skew-symmetric matrix
\begin{equation}
	\label{eq:intro-sge-choice}
	J(\Phi)
	=\tfrac{\grad S(\Phi)\wedge L(\Phi)\grad E(\Phi)}
	{\norm{\grad S(\Phi)}^2},
	\quad J(\Phi)\grad S(\Phi)=L(\Phi)\grad E(\Phi).
\end{equation}
Pulling the reversible field back to the entropy force therefore rewrites full GENERIC as the generalized gradient flow
\[
	\partial_t\Phi
	=\bigl(M(\Phi)+J(\Phi)\bigr)\grad S(\Phi).
\]
Its symmetric and skew parts are driven by the same force $\grad S$. Treating the two defining profiles explicitly preserves skew cancellation and hence the entropy law. The resulting rank-two update requires a fixed number of solves with the same dissipative operator and a small scalar system; in the Navier--Stokes and CHNS schemes of \cite{GuWangSGE2025}, only two scalar coefficients couple the otherwise decoupled subproblems.

The matrix $J(\Phi)$ in \eqref{eq:intro-sge-choice} is not unique, since the dynamics fixes only its action on $\grad S(\Phi)$. This observation motivates the present work: we seek all two-forms $\omega$ such that
\begin{equation}
	\label{eq:intro-gsge}
	\bigl(\iota_{\grad S(\Phi)}\omega(\Phi)\bigr)^\sharp
	=L(\Phi)\grad E(\Phi).
\end{equation}
We call these representations generalized skew-gradient embeddings (GSGE). The admissible two-forms form an affine space because the dynamics fixes only their contraction with the thermodynamic force. The SGE matrix \eqref{eq:intro-sge-choice} corresponds to one representative of this gauge freedom. We introduce three criteria for selecting a representative: weighted least-squares optimality, preservation of energy or other invariants, and, for isothermal systems, compatibility with Poisson geometry.

The first principle selects gauges by least-squares optimality. For any symmetric positive definite gauge map $A$, the operator-weighted two-form
\begin{equation}
	\label{eq:intro-A-gauge}
	\omega_A
	=\tfrac{(A\grad S(\Phi))^\flat\wedge (L(\Phi)\grad E(\Phi))^\flat}
	{\ip{A\grad S(\Phi)}{\grad S(\Phi)}}
\end{equation}
is the unique minimum-Hilbert--Schmidt representative in the $A$-metric. The native metric recovers SGE. The same principle gives regularized approximations and projections of non-neutral residuals, and it justifies the regularized differential gauge used later for CHNS.

The second principle builds prescribed invariants $\calC_1,\ldots,\calC_m$ into the gauge. Projecting $\grad S$ onto the $A$-orthogonal complement of their gradients gives $\widetilde X$ and
\[
	\omega_{A,\calC}
	=\tfrac{(A\widetilde X)^\flat\wedge(L\,\grad E)^\flat}
	{\ip{A\widetilde X}{\grad S}},
	\quad
	\bigl(\iota_{\grad S}\omega_{A,\calC}\bigr)^\sharp=L\,\grad E,
	\quad
	\iota_{\grad\calC_\alpha}\omega_{A,\calC}=0.
\]
Thus the selected gauge preserves all prescribed invariants; taking $\calC_1=E$ gives $\iota_{\grad E}\omega_{A,\calC}=0$.

For the isothermal reduction, we also ask when the selected gauge is Poisson \cite{CrainicFernandesMarcut2021}. A rank-two candidate is $L_2=Y\wedge(L\,\grad F)$, acting by
\[
	L_2v=\ip{Y}{v}L\,\grad F-\ip{L\,\grad F}{v}Y,
	\quad v\in H .
\]
The identities $L_2\grad F=L\,\grad F$ and $[Y,L\,\grad F]\in\Span\{Y,L\,\grad F\}$ give the reversible representation and, in finite dimensions, the rank-two Poisson criterion. The Navier--Stokes example combines a compatible marker-and-cell (MAC) spatial discretization \cite{Harlow-1965-MAC,Gong-2018-StaggeredCHNS} with the implicit midpoint rule to obtain a fully discrete isothermal GENERIC representation.

The main contributions are summarized as follows:
\begin{enumerate}[label=(\roman*)]
	\item We formulate GSGE for the GENERIC reversible action $L\,\grad E$ and its isothermal free-energy reduction, and characterize the resulting affine gauge space.

	\item We establish a unified least-squares principle for operator-weighted gauges, including regularized gauges and residual-projection corrections. These constructions retain the entropy-production law but do not, in general, preserve total energy. For full GENERIC, a projection-based construction additionally preserves total energy and any further prescribed invariants.

	\item For the isothermal reduction, we give a finite-dimensional necessary and sufficient condition for a rank-two gauge to satisfy the Jacobi identity and prove local existence with prescribed invariants.

	\item For the incompressible Navier--Stokes equations, a compatible MAC spatial discretization satisfies the discrete rank-two Jacobi identity. The implicit midpoint rule in time then yields a fully discrete scheme with a rank-two GENERIC representation and an exact discrete free-energy law.

	\item For the isothermal CHNS system, we propose a GSGE--BDF2 scheme based on a three-parameter regularized differential gauge. The scheme preserves mass, dissipates the discrete free energy unconditionally, and admits a decoupled implementation. Two parameter limits recover the SGE gauge and a gradient-weighted mass-preserving gauge, respectively.
\end{enumerate}

The remainder of the paper is organized as follows. \Cref{sec:gsge} introduces the finite-dimensional setting, reviews SGE, and formulates GSGE. \Cref{sec:selection} develops gauge-selection principles based on least-squares optimality, invariant preservation, and Poisson geometry. \Cref{sec:examples} applies the theory to the Navier--Stokes and CHNS systems. \Cref{sec:conclusion} contains concluding remarks.

\section{Preliminaries}
\label{sec:gsge}

\subsection{Notation}
\label{subsec:notation}

Let $(H,\ip{\cdot}{\cdot})$ be a finite-dimensional real inner-product space with dual space $H'$. The structural theory is finite-dimensional. For PDE models, $H$ denotes the spatially discrete state space, and the continuum formulas in \Cref{sec:examples} only indicate the identities to be preserved by compatible discretizations. Denote the Riesz isomorphism and its inverse by the musical maps
\begin{equation*}
	\flat:H\to H',\quad
	v\mapsto v^\flat=\ip{v}{\cdot},
	\quad
	\sharp=\flat^{-1}:H'\to H.
\end{equation*}
For a differentiable functional $Q$, write $\grad Q:=(\dd Q)^\sharp$.
We use $\ip{\cdot}{\cdot}$ for the inner product on $H$ and the duality pairing between $H'$ and $H$. The GENERIC operators satisfy $L,M:H\to H$.
Let $\Lambda^2H'$ denote the space of skew-symmetric bilinear forms $\omega:H\times H\to\R$. For $\alpha,\beta\in H'$, define their wedge by
\begin{equation}
	\label{eq:wedge-def}
	(\alpha\wedge\beta)(u,v)=\alpha(u)\beta(v)-\alpha(v)\beta(u),
\end{equation}
Define the contraction by $\iota_v\omega=\omega(v,\cdot)\in H'$. In particular,
$\iota_u(\alpha\wedge\beta)=\alpha(u)\beta-\beta(u)\alpha$.

For $a,b\in H$, we also write $a\wedge b$ for the skew-symmetric map defined by
\begin{equation}
	\label{eq:vector-wedge}
	(a\wedge b)\,u=\ip{a}{u}\,b-\ip{b}{u}\,a .
\end{equation}
All geometric statements are understood on an open subset of $H$. Affine constraints are handled by identifying the affine state space with its associated linear space, endowed with the induced inner product. The Lie bracket of vector fields is
\begin{equation*}
	[X,Z]=DZ[X]-DX[Z].
\end{equation*}
For a smooth functional $\mathcal Q$, write $X(\mathcal Q):=\dd\mathcal Q(X)$; then $[X,Z](\mathcal Q)=X(Z(\mathcal Q))-Z(X(\mathcal Q))$.
A bivector field is represented through the inner product by a skew-symmetric map $L:H\to H$. In particular, a decomposable bivector $Y\wedge Z$ acts by
\begin{equation}
	\label{eq:bivector-action}
	(Y\wedge Z)v=\ip{Y}{v}Z-\ip{Z}{v}Y,
	\quad v\in H.
\end{equation}
It induces the bracket $\{\mathcal A,\mathcal B\}_L=\ip{\grad\mathcal A}{L\,\grad\mathcal B}$
and defines a Poisson structure if for all  $\mathcal A,\mathcal B,\mathcal C$,
\begin{equation*}
	\{\mathcal A,\{\mathcal B,\mathcal C\}_L\}_L+\{\mathcal B,\{\mathcal C,\mathcal A\}_L\}_L+\{\mathcal C,\{\mathcal A,\mathcal B\}_L\}_L=0.
\end{equation*}
Equivalently, in local coordinates, for all $i,j,k$,
\begin{equation}
	\label{eq:jacobi-coordinates}
	\sum\limits_{\ell}
	L^{i\ell}\partial_\ell L^{jk}
	+L^{j\ell}\partial_\ell L^{ki}
	+L^{k\ell}\partial_\ell L^{ij} = 0.
\end{equation}
\subsection{Review of the skew-gradient embedding}
\label{subsec:sge-review}
We first recall the algebraic SGE construction used in both formulations.
\begin{theorem}[{\cite{GuWangSGE2025}}]
	\label{thm:sge-review}
	Let $X(\Phi),Z(\Phi)\in H$ satisfy $X(\Phi)\ne0$ and $\ip{X(\Phi)}{Z(\Phi)}=0$. Then
	\begin{equation*}
		\omega_*(\Phi)=\tfrac{X(\Phi)^\flat\wedge Z(\Phi)^\flat}{\norm{X(\Phi)}^2},
	\end{equation*}
	where \(\omega_*(\Phi)\) satisfies
	\[
		\bigl(\iota_{X(\Phi)}\omega_*(\Phi)\bigr)^\sharp=Z(\Phi).
	\]
\end{theorem}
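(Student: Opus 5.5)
The plan is to verify the identity by direct computation, using the contraction formula for a wedge of covectors from \Cref{subsec:notation}. At a fixed state $\Phi$ we abbreviate $X=X(\Phi)$ and $Z=Z(\Phi)$. The hypothesis $X\ne0$ ensures $\norm{X}^2>0$, so $\omega_*$ is a well-defined element of $\Lambda^2H'$. It is skew-symmetric because it is a scalar multiple of a wedge.

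The main step is to contract with $X$. By linearity of $\iota_X$ and the identity $\iota_u(\alpha\wedge\beta)=\alpha(u)\beta-\beta(u)\alpha$ with $\alpha=X^\flat$, $\beta=Z^\flat$ and $u=X$, we obtain
\[
	\iota_X\omega_*=\tfrac{1}{\norm{X}^2}\bigl(X^\flat(X)\,Z^\flat-Z^\flat(X)\,X^\flat\bigr)
	=\tfrac{1}{\norm{X}^2}\bigl(\norm{X}^2 Z^\flat-\ip{Z}{X}X^\flat\bigr).
\]
The orthogonality hypothesis $\ip{X}{Z}=0$ removes the second term, which leaves $\iota_X\omega_*=Z^\flat$.

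Applying $\sharp=\flat^{-1}$ then gives $(\iota_X\omega_*)^\sharp=Z$, which is the claim. The only point that needs care is the role of orthogonality. Without it, the contraction would carry the residual $-\ip{Z}{X}X/\norm{X}^2$, and this is exactly the term that orthogonality eliminates. No step here is a genuine obstacle.

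As a consistency check, we can also use the vector-wedge form \eqref{eq:vector-wedge}, $(X\wedge Z)X=\norm{X}^2Z-\ip{Z}{X}X$. After dividing by $\norm{X}^2$ this gives the same conclusion, and it matches \eqref{eq:intro-sge-choice} with $X=\grad S$ and $Z=L\grad E$.
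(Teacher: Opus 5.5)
Your direct contraction argument is correct and is essentially the paper's own route. The paper only cites this result, but it uses the same step in the proof of \Cref{thm:unified-ls} (with $A=I$ and $r=Z^\flat$) and in \Cref{prop:best-zec}: the identity $\iota_X\bigl(X^\flat\wedge r\bigr)=\norm{X}^2r-r(X)\,X^\flat$ together with $r(X)=0$.
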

For full GENERIC, take $X=\grad S$ and $Z=L\,\grad E$; for the isothermal reduction, take $X=\grad F$ and $Z=L\,\grad F$. The skew form separates the reversible profile from the irreversible discretization. In the isothermal case, the latter may use discrete-gradient methods, stabilization, convex splitting, EQ/SAV-type approaches, averaged-vector-field methods, or supplementary-variable formulations \cite{Gonzalez1996,FurihataMatsuo2010,CS,CS1,CS2,Feng-2013,Hou-2020,EQ1,EQ3,EQ4,SAV-Shen,Jiang-2022,Zhang-2022-GSAV,AVF1,Mcl-1999-DG,Gong-2021}. Freezing the profiles defining $\omega_*$ preserves skew cancellation and often decouples multiphysics variables.
Furthermore, the operator associated with $\omega_*$ need not be assembled explicitly. Its action is
\begin{equation}
	\label{eq:matrix-free-S}
	\bigl(\iota_v\omega_*\bigr)^\sharp
	=\tfrac{\ip{X(\Phi)}{v}\,Z(\Phi)
	-\ip{Z(\Phi)}{v}\,X(\Phi)}
	{\norm{X(\Phi)}^2},
\end{equation}
which requires only two inner products and two vector updates. All gauges constructed below retain this matrix-free structure.

\subsection{Generalized skew gradient embedding}
\label{subsec:gsge-gauge}

The SGE construction is a special case of the following definition.

\begin{definition}
	\label{def:gsge}
	A two-form $\omega\in\Lambda^2H'$ is an \emph{admissible ZEC gauge} at the state $\Phi$ if
	\begin{equation}
		\label{eq:compatibility}
		\bigl(\iota_{\grad S(\Phi)}\omega(\Phi)\bigr)^\sharp
		=L(\Phi)\grad E(\Phi).
	\end{equation}
	The corresponding full GENERIC system is the entropy-driven generalized gradient flow
	\begin{equation}
		\label{eq:gsge-system}
		\partial_t\Phi=\bigl(\iota_{\grad S(\Phi)}\omega(\Phi)\bigr)^\sharp+M(\Phi)\,\grad S(\Phi).
	\end{equation}
\end{definition}
For full GENERIC, skewness gives the entropy law directly:
\begin{equation*}
	\tfrac{\dd}{\dd t}S
	=\omega(\grad S,\grad S)+\ip{\grad S}{M\,\grad S}
	=\ip{\grad S}{M\,\grad S}\ge0.
\end{equation*}
At states with $\grad S\ne0$, \Cref{thm:sge-review} supplies an admissible gauge because $\ip{\grad S}{L\,\grad E}=0$.
\begin{proposition}
	\label{prop:affine-gauge}
	Suppose $X\in H$ is nonzero, $r\in H'$ satisfies $r(X)=0$, and $\omega_0(X,v)=r(v)$ for every $v\in H$. Then every gauge satisfying the same condition is of the form
	\begin{equation*}
		\omega=\omega_0+\zeta,
		\quad
		\zeta(X,v)=0,\quad \forall v\in H,
	\end{equation*}
\end{proposition}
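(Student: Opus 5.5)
The statement is essentially linear algebra: the condition $\omega(X,\cdot)=r$ is an affine constraint on $\omega\in\Lambda^2H'$, since the contraction map $\omega\mapsto\iota_X\omega$ is linear. So the plan is to show that every solution differs from $\omega_0$ by an element of the kernel of this map, and conversely that adding a kernel element preserves the condition. Both directions are one-line computations. Afterwards I would check that the hypotheses are consistent, namely that a particular solution exists.

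For necessity, let $\omega\in\Lambda^2H'$ satisfy $\omega(X,v)=r(v)$ for all $v\in H$, and set $\zeta:=\omega-\omega_0$. Then $\zeta\in\Lambda^2H'$, because $\Lambda^2H'$ is a linear space, so $\zeta$ is bilinear and skew-symmetric. By bilinearity in the first slot,
\[
\zeta(X,v)=\omega(X,v)-\omega_0(X,v)=r(v)-r(v)=0 \quad\text{for all } v\in H .
\]
This gives the decomposition $\omega=\omega_0+\zeta$ with $\iota_X\zeta=0$.

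For sufficiency, any skew form $\zeta$ with $\iota_X\zeta=0$ gives $(\omega_0+\zeta)(X,v)=r(v)+0=r(v)$. The admissible set is therefore exactly the affine space $\omega_0+\ker(\iota_X|_{\Lambda^2H'})$. Taking $r=(L\,\grad E)^\flat$ and $X=\grad S$ connects this with \eqref{eq:compatibility}, since applying $\sharp$ is a bijection.

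The only subtle point is the role of the hypotheses $X\neq0$ and $r(X)=0$, which the affine-space argument itself does not use. The condition $r(X)=0$ is forced: skewness gives $r(X)=\omega_0(X,X)=0$, so it is necessary for any gauge to exist. Together with $X\ne0$, it guarantees nonemptiness through \Cref{thm:sge-review} applied to $Z=r^\sharp$, which yields the explicit representative $\omega_*=X^\flat\wedge r/\norm{X}^2$. I would add this as a remark, and optionally describe the kernel: completing $X/\norm{X}$ to an orthonormal basis, $\zeta$ with $\iota_X\zeta=0$ is exactly a skew form on $X^\perp$ extended by zero. Hence the gauge freedom has dimension $\binom{n-1}{2}$, where $n=\dim H$. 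None of these steps is a real obstacle; the main care needed is to state the converse inclusion so that the characterization is an equality of sets.
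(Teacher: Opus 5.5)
Your proposal is correct and follows the paper's argument: $\omega\mapsto\iota_X\omega$ is linear, so the constraint $\iota_X\omega=r$ is affine and any two solutions differ by a skew form annihilated by $\iota_X$. Your converse inclusion, your observation that $r(X)=0$ is forced by skewness, and your dimension count $\binom{n-1}{2}$ are correct additions that the paper does not spell out.
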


\begin{proof}
	The compatibility condition is affine in $\omega$, so two solutions differ by a two-form vanishing on $(X,v)$ for every $v\in H$.
\end{proof}

Thus the thermodynamic balance laws leave an affine gauge freedom.

\begin{remark}[Isothermal reduction]
	For an isothermal system, replace $\grad S$ and $L\,\grad E$ in \Cref{def:gsge} by $\grad F$ and $L\,\grad F$, respectively. The resulting free-energy-driven system is
	\begin{equation*}
		\partial_t\Phi=-\tfrac{1}{\theta}M(\Phi)\grad F(\Phi)
		+\bigl(\iota_{\grad F(\Phi)}\omega(\Phi)\bigr)^\sharp,
	\end{equation*}
	and skewness gives
	\begin{equation*}
		\tfrac{\dd}{\dd t}F
		=-\tfrac{1}{\theta}\ip{\grad F}{M\,\grad F}+\omega(\grad F,\grad F)
		=-\tfrac{1}{\theta}\ip{\grad F}{M\,\grad F}\le0.
	\end{equation*}
	At states with $\grad F\ne0$, admissible gauges exist because $\ip{\grad F}{L\,\grad F}=0$, and \Cref{prop:affine-gauge} gives the corresponding affine family.
\end{remark}

\section{Gauge selection}
\label{sec:selection}

Selecting a representative from this family requires an additional criterion. We consider three: least-squares minimality, preservation of prescribed invariants, and compatibility with Poisson geometry.

\subsection{A unified least-squares principle}
\label{subsec:unified-ls}

Let $A:H\to H$ be self-adjoint and positive definite, with norms
\begin{equation*}
	\norm{v}_A^2=\ip{Av}{v}\quad\text{on }H,
	\quad
	\norm{r}_{A^{-1}}^2=\ip{r^\sharp}{A^{-1}r^\sharp}\quad\text{on }H'.
\end{equation*}
For $\omega,\xi\in\Lambda^2H'$, define the $A$-weighted Frobenius inner product and norm by
\begin{equation}
	\label{eq:two-form-Frobenius}
	\ip{\omega}{\xi}_{\mathrm F,A}
	:=\sum_{i,j=1}^{\dim H}\omega(e_i,e_j)\xi(e_i,e_j),
	\quad
	\norm{\omega}_{\mathrm F,A}^2:=\ip{\omega}{\omega}_{\mathrm F,A},
\end{equation}
where $\{e_i\}$ is any $A$-orthonormal basis. This is the Frobenius structure induced by the $A$-inner product on covariant two-tensors, restricted to $\Lambda^2H'$. The inner-product properties, and hence the norm properties, are immediate once basis independence is established. We verify the latter briefly. Let $\{f_k\}$ be another $A$-orthonormal basis and write $f_k=\sum_i c_{ik}e_i$. Since $\sum_k c_{ik}c_{pk}=\delta_{ip}$, bilinearity gives
	\begin{align*}
		\sum_{k,l}\omega(f_k,f_l)\xi(f_k,f_l)
		&=\sum_{i,j,p,q}\omega(e_i,e_j)\xi(e_p,e_q)\,
		\delta_{ip}\delta_{jq} \\
		&=\sum_{i,j}\omega(e_i,e_j)\xi(e_i,e_j).
	\end{align*}
Thus \eqref{eq:two-form-Frobenius} is well-defined.

\begin{theorem}[Unified least-squares gauge]
	\label{thm:unified-ls}
	Let $X\ne0$ and let $r\in H'$ satisfy $r(X)=0$. Among all two-forms $\omega\in\Lambda^2H'$ with $\iota_X\omega=r$, the gauge
	\begin{equation}
		\label{eq:A-minimal-form}
		\omega_{A,r}=\tfrac{(AX)^\flat\wedge r}{\ip{AX}{X}}
	\end{equation}
	uniquely minimizes $\norm{\cdot}_{\mathrm F,A}$, and
	\begin{equation*}
		\norm{\omega_{A,r}}_{\mathrm F,A}^2
		=\tfrac{2\norm{r}_{A^{-1}}^2}{\norm{X}_A^2}.
	\end{equation*}
	For $X=\grad S$ and $r=(L\,\grad E)^\flat$ in full GENERIC, or $X=\grad F$ and $r=(L\,\grad F)^\flat$ in the isothermal reduction, $A=I$ in \eqref{eq:A-minimal-form} gives the SGE gauge. Thus the embedding of \cite{GuWangSGE2025} is the native-metric least-squares gauge.
\end{theorem}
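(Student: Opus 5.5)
The plan is to work in an $A$-orthonormal basis adapted to $X$, using the basis independence of $\norm{\cdot}_{\mathrm F,A}$ established just before the theorem. Set $e_1=X/\norm{X}_A$ and complete it to an $A$-orthonormal basis $\{e_1,\dots,e_n\}$ of $H$. The constraint $\iota_X\omega=r$ is equivalent to
\[
\omega(e_1,e_j)=\frac{r(e_j)}{\norm{X}_A}\quad\text{for all } j .
\]
These are the only restrictions. For $j=1$ the condition reads $\omega(e_1,e_1)=0=r(e_1)$, which is automatic because $r(X)=0$. By skew-symmetry the entries $\omega(e_j,e_1)$ are then also fixed. The entries $\omega(e_i,e_j)$ with $i,j\ge2$ remain completely free, apart from skew-symmetry. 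This matches the affine description in \Cref{prop:affine-gauge}.

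Split the Frobenius sum accordingly:
\[
\norm{\omega}_{\mathrm F,A}^2=2\sum_{j}\omega(e_1,e_j)^2+\sum_{i,j\ge2}\omega(e_i,e_j)^2 .
\]
The first sum is determined by the constraint. The second sum is nonnegative and vanishes exactly when $\omega(e_i,e_j)=0$ for all $i,j\ge2$. Hence there is a unique minimizer: the unique admissible two-form whose block on $\Span\{e_2,\dots,e_n\}$ vanishes. Uniqueness follows because the minimizer's entries are fixed in every slot.

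It remains to identify this minimizer with $\omega_{A,r}$. First, $\omega_{A,r}$ is admissible: by the contraction formula for wedges,
\[
\iota_X\omega_{A,r}=\frac{\ip{AX}{X}\,r-r(X)\,(AX)^\flat}{\ip{AX}{X}}=r .
\]
Second, for $i,j\ge2$ we have $(AX)^\flat(e_i)=\ip{AX}{e_i}$, which is the $A$-inner product of $X$ with $e_i$ and is therefore $0$. So both terms of the wedge vanish on $(e_i,e_j)$, and $\omega_{A,r}$ is the minimizer.

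For the norm value, the minimal norm is
\[
\norm{\omega_{A,r}}_{\mathrm F,A}^2=\frac{2}{\norm{X}_A^2}\sum_j r(e_j)^2 .
\]
Write $r(e_j)=\ip{r^\sharp}{e_j}=\ip{A(A^{-1}r^\sharp)}{e_j}$. Then Parseval in the $A$-inner product gives
\[
\sum_j r(e_j)^2=\norm{A^{-1}r^\sharp}_A^2=\ip{r^\sharp}{A^{-1}r^\sharp}=\norm{r}_{A^{-1}}^2 ,
\]
which is the stated formula. Finally, setting $A=I$ turns \eqref{eq:A-minimal-form} into $X^\flat\wedge Z^\flat/\norm{X}^2$ with $Z=r^\sharp$. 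This is $\omega_*$ of \Cref{thm:sge-review}, and the GENERIC and isothermal choices of $(X,r)$ are the stated instances.

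The only delicate points are the bookkeeping of the factor $2$ from the two skew-symmetric off-diagonal blocks, and the Parseval step converting $\sum_j r(e_j)^2$ into the $A^{-1}$ dual norm. I expect the latter to be the main place to be careful: one must check that $r^\sharp$ is taken with respect to the native inner product while the basis is $A$-orthonormal.
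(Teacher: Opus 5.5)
Your proof is correct and is essentially the paper's argument: it uses the same $A$-orthonormal basis with $e_1=X/\norm{X}_A$, shows admissibility of $\omega_{A,r}$ via the contraction formula, and shows that its block with $i,j\ge2$ vanishes. The differences are cosmetic. You split the Frobenius sum entrywise, whereas the paper writes $\omega=\omega_{A,r}+\xi$ with $\iota_X\xi=0$ and applies Pythagoras. You also spell out the Parseval step $\sum_j r(e_j)^2=\norm{r}_{A^{-1}}^2$, which the paper leaves implicit.
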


\begin{proof}
	Let $e_1=X/\norm{X}_A$ and extend it to an $A$-orthonormal basis. Since $r(X)=0$, contraction of \eqref{eq:A-minimal-form} with $X$ gives $\iota_X\omega_{A,r}=r$. By \Cref{prop:affine-gauge}, every other admissible gauge has the form $\omega=\omega_{A,r}+\xi$ with $\iota_X\xi=0$, so $\xi(e_1,e_j)=0$ for every $j$. On the other hand, $\omega_{A,r}(e_i,e_j)=0$ whenever $i,j\ge2$. Hence $\ip{\omega_{A,r}}{\xi}_{\mathrm F,A}=0$, and
	\begin{equation*}
		\norm{\omega}_{\mathrm F,A}^2
		=\norm{\omega_{A,r}}_{\mathrm F,A}^2
		+\norm{\xi}_{\mathrm F,A}^2.
	\end{equation*}
	This proves uniqueness and minimality. The only nonzero components of $\omega_{A,r}$ have one index equal to $1$, and $r(e_1)=0$; therefore \eqref{eq:two-form-Frobenius} gives the stated norm.
\end{proof}

\begin{theorem}[Regularized gauge]
	Let $X\in H$ and let $r\in H'$ satisfy $r(X)=0$. For $\sigma>0$, the problem
	\begin{equation*}
		\min_{\omega\in\Lambda^2H'}
		\left\{
		\tfrac12\norm{\iota_X\omega-r}_{A^{-1}}^2
		+\tfrac{\sigma}{4}\norm{\omega}_{\mathrm F,A}^2
		\right\}
	\end{equation*}
	has the unique solution
	\begin{equation}
		\label{eq:regularized-solution}
		\omega^{\sigma}
		=\tfrac{(AX)^\flat\wedge r}{\ip{AX}{X}+\sigma}.
	\end{equation}
	If $X\ne0$, then $\omega^\sigma$ converges to the minimum-norm gauge \eqref{eq:A-minimal-form} as $\sigma\downarrow0$.
\end{theorem}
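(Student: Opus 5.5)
The plan is to work in an $A$-orthonormal basis and reduce the problem to a strictly convex quadratic minimization over the independent components of $\omega$. The case $X=0$ is handled first. The objective becomes $\tfrac12\norm{r}^2_{A^{-1}}+\tfrac{\sigma}{4}\norm{\omega}^2_{\mathrm F,A}$, which is uniquely minimized by $\omega=0$, and this agrees with \eqref{eq:regularized-solution}. For $X\ne0$, set $e_1=X/\norm{X}_A$, extend it to an $A$-orthonormal basis $\{e_i\}$, and write $\omega_{ij}=\omega(e i,e_j)$ with $\omega_{ij}=-\omega_{ji}$.

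The first task is to express both terms in coordinates. The key identity is that for $s\in H'$ one has $\norm{s}^2_{A^{-1}}=\sum_j s(e_j)^2$. This holds because $\{Ae_j\}$ is $A^{-1}$-orthonormal and $s(e_j)=\ip{s^\sharp}{e_j}=\ip{A^{-1}s^\sharp}{Ae_j}$, and I will verify it briefly. Then $(\iota_X\omega-r)(e_j)=\norm{X}_A\,\omega_{1j}-r_j$, where $r_j=r(e_j)$ and $r_1=0$. Also $\norm{\omega}^2_{\mathrm F,A}=2\sum_{i<j}\omega_{ij}^2$.

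The objective therefore splits into a sum of independent scalar problems. For $j\ge2$ we get $\tfrac12(\norm{X}_A\omega_{1j}-r_j)^2+\tfrac{\sigma}{2}\omega_{1j}^2$. For $2\le i<j$ we get $\tfrac{\sigma}{2}\omega_{ij}^2$. The $j=1$ term is constant since $\omega_{11}=0$ and $r_1=0$.

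Each scalar problem is strictly convex because $\sigma>0$, so the minimizer exists and is unique. The block $2\le i<j$ gives $\omega_{ij}=0$. The first block gives $\omega_{1j}=\norm{X}_A r_j/(\norm{X}_A^2+\sigma)$.

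Next I will check that the candidate \eqref{eq:regularized-solution} has exactly these components. Using $(AX)^\flat(e_j)=\ip{AX}{e_j}=\norm{X}_A\delta_{1j}$, we get $((AX)^\flat\wedge r)(e_1,e_j)=\norm{X}_A r_j$. The components with $i,j\ge2$ vanish. Since $\ip{AX}{X}=\norm{X}_A^2$, the components match, and uniqueness follows from strict convexity.

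Finally, for $X\ne0$ the denominator $\ip{AX}{X}+\sigma$ tends to $\ip{AX}{X}>0$ as $\sigma\downarrow0$. Hence $\omega^\sigma\to\omega_{A,r}$ in any norm on the finite-dimensional space $\Lambda^2H'$, and by \Cref{thm:unified-ls} this limit is the minimum-norm gauge.

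The main obstacle is bookkeeping rather than concept. First, the factor $\sigma/4$ must combine correctly with the double counting in the Frobenius norm, which gives $2\sum_{i<j}$. Second, the $A^{-1}$ dual norm must be correctly identified with the sum of squares of components in the $A$-orthonormal basis. I will verify both normalizations carefully, since an error there would change the denominator away from $\ip{AX}{X}+\sigma$.
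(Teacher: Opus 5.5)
Your proof is correct, but it is organized differently from the paper's. You work in an $A$-orthonormal basis with $e_1=X/\norm{X}_A$, split the objective into independent strictly convex scalar problems in $\omega_{1j}$ ($j\ge2$) and $\omega_{ij}$ ($2\le i<j$), solve them, and match the result against \eqref{eq:regularized-solution}.

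The paper instead minimizes in two stages. It fixes the contraction $q=\iota_X\omega$, which automatically satisfies $q(X)=0$, and uses \Cref{thm:unified-ls} to minimize $\norm{\omega}_{\mathrm F,A}$ on the fiber $\{\iota_X\omega=q\}$; the minimal value is $2\norm{q}_{A^{-1}}^2/\norm{X}_A^2$. What remains is the shrinkage problem $\tfrac12\norm{q-r}_{A^{-1}}^2+\tfrac{\sigma}{2\norm{X}_A^2}\norm{q}_{A^{-1}}^2$ over $q(X)=0$. Its minimizer is $q=\tfrac{\norm{X}_A^2}{\norm{X}_A^2+\sigma}\,r$, and substituting this into \eqref{eq:A-minimal-form} gives the result.

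The paper's route is shorter and shows the structure used in \Cref{rem:regularized-gauge}: the regularized gauge is exactly the least-squares gauge for a uniformly shrunk contraction. Your route is self-contained and makes both normalizations explicit: $\norm{s}_{A^{-1}}^2=\sum_j s(e_j)^2$, and the factor $2$ in $\norm{\omega}_{\mathrm F,A}^2=2\sum_{i<j}\omega_{ij}^2$ absorbs the $\sigma/4$. The paper inherits both only implicitly, through the norm formula in \Cref{thm:unified-ls}. Your finding that the block $\omega_{ij}$, $i,j\ge2$, vanishes is the paper's fiber-minimization step written in coordinates.
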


\begin{proof}
	If $X=0$, the unique minimizer is $\omega^\sigma=0$. Suppose $X\ne0$. Set $q=\iota_X\omega$. Then $q(X)=0$, and \Cref{thm:unified-ls} reduces the problem to
	\begin{equation*}
		\tfrac12\norm{q-r}_{A^{-1}}^2
		+\tfrac{\sigma}{2\norm{X}_A^2}\norm{q}_{A^{-1}}^2
	\end{equation*}
	over $q(X)=0$. Its unique minimizer is $q=\tfrac{\norm{X}_A^2}{\norm{X}_A^2+\sigma}r$. Substitution into \eqref{eq:A-minimal-form} gives \eqref{eq:regularized-solution}.
\end{proof}

\begin{remark}
	\label{rem:regularized-gauge}
	Contraction of \eqref{eq:regularized-solution} gives
	\begin{equation*}
		\iota_X\omega^\sigma
		=\tfrac{\ip{AX}{X}}{\ip{AX}{X}+\sigma}\,r,
		\quad
		\norm{\iota_X\omega^\sigma-r}_{A^{-1}}
		\le \tfrac{\sigma}{c}\norm{r}_{A^{-1}}
	\end{equation*}
	whenever $\ip{AX}{X}\ge c>0$. Thus, for $X=\grad S$ and $r=(L\,\grad E)^\flat$,
	\begin{equation*}
		\bigl(\iota_{\grad S(\Phi)}\omega^\sigma\bigr)^\sharp
		=L(\Phi)\grad E(\Phi)+O(\sigma),
		\quad
		\omega^\sigma(\grad S(\Phi),\grad S(\Phi))=0.
	\end{equation*}
	Choosing $\sigma=O(\rho^p)$ for a method of order $p$ therefore preserves its formal order while retaining exact skew cancellation. The regularized formula is defined at $X=0$, where it reproduces $r$ only if $r=0$.
\end{remark}

The preceding remark regularizes a compatible profile satisfying $r(X)=0$. If a computed profile does not satisfy this condition exactly, the same wedge construction gives its nearest compatible correction.

\begin{proposition}[Optimal ZEC correction]
	\label{prop:best-zec}
	For $X\ne0$ and $r\in H'$, define
	\begin{equation*}
		\Pi_X^A r
		:=\iota_X\!\left(\tfrac{(AX)^\flat\wedge r}{\ip{AX}{X}}\right)
		=r-\tfrac{r(X)}{\ip{AX}{X}}(AX)^\flat.
	\end{equation*}
	Then $\Pi_X^A r$ uniquely minimizes $\norm{\xi-r}_{A^{-1}}^2$ over all $\xi\in H'$ satisfying $\xi(X)=0$.
\end{proposition}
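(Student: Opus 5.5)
The plan has three steps: compute the contraction, then recast the problem as an orthogonal projection in the Hilbert space $(H',\norm{\cdot}_{A^{-1}})$, then conclude. Optimality will follow from a Pythagorean identity.

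\emph{Step 1 (the displayed formula).} Using $\iota_u(\alpha\wedge\beta)=\alpha(u)\beta-\beta(u)\alpha$ with $\alpha=(AX)^\flat$, $\beta=r$ and $u=X$, we get
\[
\iota_X\bigl((AX)^\flat\wedge r\bigr)=\ip{AX}{X}\,r-r(X)\,(AX)^\flat .
\]
Dividing by $\ip{AX}{X}>0$ gives the second expression for $\Pi_X^A r$. Applying it to $X$ gives $r(X)-r(X)=0$, so $\Pi_X^A r$ lies in the constraint set $K:=\{\xi\in H':\xi(X)=0\}$.

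\emph{Step 2 (orthogonality).} The $A^{-1}$ inner product on $H'$ is $\ip{\xi}{\eta}_{A^{-1}}=\ip{\xi^\sharp}{A^{-1}\eta^\sharp}$. For $\eta=(AX)^\flat$ we have $\eta^\sharp=AX$, hence
\[
\ip{\xi}{(AX)^\flat}_{A^{-1}}=\ip{\xi^\sharp}{X}=\xi(X).
\]
So $K$ is exactly the $A^{-1}$-orthogonal complement of the line $\Span\{(AX)^\flat\}$. By Step 1, $r-\Pi_X^A r$ is a multiple of $(AX)^\flat$, so it is $A^{-1}$-orthogonal to $K$. Therefore $\Pi_X^A r$ is the $A^{-1}$-orthogonal projection of $r$ onto the subspace $K$.

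\emph{Step 3 (conclusion).} Take any $\xi\in K$. Since $\xi-\Pi_X^A r\in K$ and $\Pi_X^A r-r\perp K$, expanding $\xi-r=(\xi-\Pi_X^A r)+(\Pi_X^A r-r)$ gives
\[
\norm{\xi-r}_{A^{-1}}^2=\norm{\xi-\Pi_X^A r}_{A^{-1}}^2+\norm{\Pi_X^A r-r}_{A^{-1}}^2 .
\]
The right side is minimized if and only if $\xi=\Pi_X^A r$, because $\norm{\cdot}_{A^{-1}}$ is a genuine norm ($A$ positive definite). This gives both minimality and uniqueness.

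No step is a real obstacle. The only point needing care is the identity $\ip{\xi}{(AX)^\flat}_{A^{-1}}=\xi(X)$, which relies on self-adjointness of $A$ and the musical conventions. This is the same structure that underlies \Cref{thm:unified-ls}.
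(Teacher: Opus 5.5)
Your proposal is correct and follows the paper's proof. Both verify $(\Pi_X^A r)(X)=0$ from the contraction formula and use $\ip{\xi}{(AX)^\flat}_{A^{-1}}=\xi(X)$ to show that $r-\Pi_X^A r$ is $A^{-1}$-orthogonal to $\{\xi:\xi(X)=0\}$; you additionally write out the Pythagorean identity that the paper leaves implicit in the phrase ``orthogonal projection.''
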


\begin{proof}
	The contraction formula gives $(\Pi_X^A r)(X)=0$. Moreover, $r-\Pi_X^A r$ is a multiple of $(AX)^\flat$, which is $A^{-1}$-orthogonal to every $\zeta\in H'$ satisfying $\zeta(X)=0$, since $\ip{(AX)^\flat}{\zeta}_{A^{-1}}=\zeta(X)$. Thus $\Pi_X^A r$ is the stated orthogonal projection.
\end{proof}

For full GENERIC, the gauges selected above retain the entropy-production law by skewness but need not encode total-energy conservation. We next impose kernel conditions to preserve energy and other invariants.

\subsection{Invariant-preserving gauges}
\label{subsec:invariants}

For full GENERIC, let $\calC_1=E,\calC_2,\ldots,\calC_m$ have linearly independent gradients at the current state and be preserved by $L\,\grad E$. Further functionals may represent mass, momentum, or other constraints. Set
\begin{equation*}
	X_{\calC_\alpha}(\Phi):=(\dd\calC_\alpha(\Phi))^\sharp,
	\quad
	\ip{(L(\Phi)\grad E(\Phi))^\flat}{X_{\calC_\alpha}(\Phi)}=0,
	\quad \alpha=1,\ldots,m.
\end{equation*}
To construct the force leg, form the Gram system
\begin{equation*}
	B_{\alpha\beta}=\ip{A X_{\calC_\beta}(\Phi)}{X_{\calC_\alpha}(\Phi)},
	\quad
	b_\alpha=\ip{A\grad S(\Phi)}{X_{\calC_\alpha}(\Phi)},
\end{equation*}
solve $B\lambda=b$, and set
\begin{equation}
	\label{eq:projected-force-leg}
	\widetilde X(\Phi)=\grad S(\Phi)-\sum_{\alpha=1}^m\lambda_\alpha X_{\calC_\alpha}(\Phi).
\end{equation}

\begin{proposition}
	\label{prop:invariant-gauge}
	Under the assumptions above, if $\widetilde X(\Phi)\ne0$, then the two-form
	\begin{equation}
		\label{eq:projected-invariant-gauge}
		\omega_{A,\calC}
		=\tfrac{(A\widetilde X(\Phi))^\flat\wedge (L(\Phi)\grad E(\Phi))^\flat}
		{\ip{A\widetilde X(\Phi)}{\grad S(\Phi)}}
	\end{equation}
	satisfies
	\begin{equation*}
		\bigl(\iota_{\grad S(\Phi)}\omega_{A,\calC}\bigr)^\sharp
		=L(\Phi)\grad E(\Phi),
		\quad
		\iota_{X_{\calC_\alpha}(\Phi)}\omega_{A,\calC}=0,
		\quad \alpha=1,\ldots,m.
	\end{equation*}
\end{proposition}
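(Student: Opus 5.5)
The plan is to verify both identities directly from the contraction formula $\iota_u(\alpha\wedge\beta)=\alpha(u)\beta-\beta(u)\alpha$, with $\alpha=(A\widetilde X)^\flat$ and $\beta=(L\,\grad E)^\flat$. Before that, I will check that the denominator is nonzero. By construction, $\lambda$ solves the Gram system, which is solvable because $B$ is the $A$-Gram matrix of linearly independent vectors and hence positive definite. So $\widetilde X$ is the $A$-orthogonal projection of $\grad S$ onto the $A$-orthogonal complement of $\Span\{X_{\calC_\alpha}\}$. In particular, $\ip{A\widetilde X}{X_{\calC_\alpha}}=b_\alpha-\sum_\beta B_{\alpha\beta}\lambda_\beta=0$ for each $\alpha$, using the symmetry of $A$. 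Consequently
\[
	\ip{A\widetilde X}{\grad S}
	=\ip{A\widetilde X}{\widetilde X}+\sum_\alpha\lambda_\alpha\ip{A\widetilde X}{X_{\calC_\alpha}}
	=\norm{\widetilde X}_A^2>0,
\]
since $\widetilde X\ne0$. Thus the gauge \eqref{eq:projected-invariant-gauge} is well defined.

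For the compatibility identity, contract with $\grad S$:
\[
	\iota_{\grad S}\omega_{A,\calC}
	=\tfrac{\ip{A\widetilde X}{\grad S}\,(L\,\grad E)^\flat-\ip{L\,\grad E}{\grad S}\,(A\widetilde X)^\flat}{\ip{A\widetilde X}{\grad S}}.
\]
The second term vanishes because $\ip{\grad S}{L\,\grad E}=0$. In the full GENERIC setting this follows from the degeneracy $L\,\grad S=0$ together with skew-symmetry of $L$. The first term reduces to $(L\,\grad E)^\flat$, and applying $\sharp$ gives $L\,\grad E$.

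For the kernel conditions, contract with $X_{\calC_\alpha}$. The coefficient of $(L\,\grad E)^\flat$ is $\ip{A\widetilde X}{X_{\calC_\alpha}}$, which is zero by the orthogonality established above. The coefficient of $(A\widetilde X)^\flat$ is $\ip{L\,\grad E}{X_{\calC_\alpha}}$, which is zero by the standing assumption that each $\calC_\alpha$ is preserved by $L\,\grad E$. Hence $\iota_{X_{\calC_\alpha}}\omega_{A,\calC}=0$ for every $\alpha$.

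None of these steps is a genuine obstacle. The only point requiring care is the first step: showing $\ip{A\widetilde X}{\grad S}\ne0$, which is where the projection structure of $\widetilde X$ and the hypothesis $\widetilde X\ne0$ enter.
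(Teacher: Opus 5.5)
Your proof is correct and follows the paper's argument: the Gram system makes $\widetilde X$ $A$-orthogonal to each $X_{\calC_\alpha}$, which gives $\ip{A\widetilde X}{\grad S}=\norm{\widetilde X}_A^2>0$. Both identities then follow by contraction, using $\ip{\grad S}{L\,\grad E}=0$ and the invariance of the $\calC_\alpha$ under $L\,\grad E$. You just supply details the paper leaves implicit, namely solvability of the Gram system and the explicit contraction formulas.
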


\begin{proof}
	The definition gives $\ip{A\widetilde X}{X_{\calC_\alpha}}=0$ and
	\begin{equation*}
		\ip{A\widetilde X}{\grad S}=\ip{A\widetilde X}{\widetilde X}>0.
	\end{equation*}
	The two identities now follow from \eqref{eq:projected-invariant-gauge}, the ZEC condition, and preservation of the $\calC_\alpha$ by $L\,\grad E$.
\end{proof}

The isothermal counterpart follows by replacing $\grad S$ and $L\,\grad E$ with $\grad F$ and $L\,\grad F$, respectively.

\begin{remark}
	Let $R_{\rm irr}=M\,\grad S$ for full GENERIC and $R_{\rm irr}=-\tfrac{1}{\theta}M\,\grad F$ for the isothermal reduction. Suppose in addition that
	\begin{equation*}
		\ip{R_{\rm irr}(\Phi)}{X_{\calC_\alpha}(\Phi)}=0,
		\quad \alpha=1,\ldots,m.
	\end{equation*}
	Then each $\calC_\alpha$ is an invariant of the full system, since
	\begin{equation*}
		\tfrac{\dd}{\dd t}\calC_\alpha(\Phi)
		=\ip{R_{\rm irr}(\Phi)}{X_{\calC_\alpha}(\Phi)}
		+\ip{(L(\Phi)\grad E(\Phi))^\flat}{X_{\calC_\alpha}(\Phi)}=0.
	\end{equation*}
	For $\calC_1=E$, the friction degeneracy and $\iota_{\grad E}\omega=0$ give total-energy conservation. The discrete counterpart requires the corresponding chain rule and orthogonality conditions.
\end{remark}

\subsection{Poisson and GENERIC gauges}
\label{sec:poisson-generic}

For full GENERIC, the GSGE gauge acts on $\grad S$, whereas a Poisson operator generates the reversible field from $\grad E$ and has $\grad S$ in its kernel. These requirements cannot be imposed on the same nonzero operator. We therefore restrict the Poisson reconstruction to the isothermal reduction, where the single force is $\grad F$ and the compatibility condition is $L_2\grad F=L\grad F$.

Set $Z=L\,\grad F$ and consider $L_2=Y\wedge Z$. By \eqref{eq:bivector-action}, $L_2\grad F=Z$ if $\dd F(Y)=1$ and $\dd F(Z)=0$, while the Jacobi identity adds a differential condition.

\begin{proposition}[Rank-two Jacobi criterion]
	\label{prop:rank-two-poisson}
	Let $Y$ and $Z$ be smooth vector fields. Then $L_2=Y\wedge Z$ defines a Poisson structure if and only if
	\begin{equation}
		\label{eq:frobenius-condition}
		Y\wedge Z\wedge[Y,Z]=0,
	\end{equation}
	Equivalently, $[Y,Z]\in\Span\{Y,Z\}$ wherever $Y$ and $Z$ are linearly independent; at points where $Y\wedge Z=0$, \eqref{eq:frobenius-condition} holds automatically. In particular, $[Y,Z]=0$ suffices \cite{CrainicFernandesMarcut2021}.
\end{proposition}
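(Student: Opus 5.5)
The plan is to use the Schouten bracket. A bivector $\pi$ is Poisson if and only if $[\pi,\pi]=0$. For decomposable $\pi=Y\wedge Z$, the graded Leibniz rule and graded skew-symmetry of the Schouten bracket give the standard expansion
\begin{equation*}
	[Y\wedge Z,Y\wedge Z]
	=\pm 2\,Y\wedge Z\wedge[Y,Z],
\end{equation*}
with a convention-dependent sign and factor. The other terms cancel because $[Y,Y]=[Z,Z]=0$ and $Y\wedge Y=Z\wedge Z=0$.

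Since the paper phrases Jacobi through \eqref{eq:jacobi-coordinates}, I would check this identity directly rather than quote Schouten calculus. Write $L_2^{jk}=Y^jZ^k-Z^jY^k$. Substitute into the cyclic sum $\sum_\ell L_2^{i\ell}\partial_\ell L_2^{jk}+\text{cyc}$. The derivatives produce terms of the form $Y^i (Y\cdot\partial) Z^j\, Y^k$ and similar. Grouping them, the terms that are not directional derivatives either cancel cyclically or assemble into $DZ[Y]-DY[Z]=[Y,Z]$. The result should be that the Jacobiator equals a constant multiple of the totally antisymmetric tensor $(Y\wedge Z\wedge[Y,Z])^{ijk}$, meaning the alternating sum over permutations of $Y^iZ^jW^k$ with $W=[Y,Z]$. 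Equivalently, evaluating the Jacobiator on three functionals $\mathcal A,\mathcal B,\mathcal C$ gives a determinant of the $3\times3$ matrix whose rows are $(Y(\mathcal Q),Z(\mathcal Q),[Y,Z](\mathcal Q))$ for $\mathcal Q=\mathcal A,\mathcal B,\mathcal C$. I would organize the computation this way: $\{\mathcal B,\mathcal C\}=Y(\mathcal B)Z(\mathcal C)-Z(\mathcal B)Y(\mathcal C)$, and applying $\{\mathcal A,\cdot\}=Y(\mathcal A)Z(\cdot)-Z(\mathcal A)Y(\cdot)$ produces second-order terms. The bookkeeping of these second derivatives is the main obstacle. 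Pure second-derivative pieces like $Z(Z(\mathcal C))$ cancel in the cyclic sum by symmetry of the coefficients. Mixed pieces $Y(Z(\cdot))$ and $Z(Y(\cdot))$ pair into $[Y,Z](\cdot)$ via $[X,Z](\mathcal Q)=X(Z(\mathcal Q))-Z(X(\mathcal Q))$. The survivors form the determinant above.

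With the identity Jacobiator $=c\,Y\wedge Z\wedge[Y,Z]$ for some $c\ne0$, the equivalence \eqref{eq:frobenius-condition} is immediate.

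For the reformulation, at points where $Y,Z$ are independent, $Y\wedge Z\wedge W=0$ holds iff $W\in\Span\{Y,Z\}$. This is standard linear algebra: extend $Y,Z$ to a basis and read off coefficients. Where $Y\wedge Z=0$, the triple wedge vanishes trivially. Finally, $[Y,Z]=0$ is a special case, which gives the last claim, consistent with \cite{CrainicFernandesMarcut2021}.
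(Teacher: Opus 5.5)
Your proposal is correct. The paper gives no proof of its own and simply cites \cite{CrainicFernandesMarcut2021}. There the criterion comes from the Schouten identity $[Y\wedge Z,Y\wedge Z]=2\,Y\wedge Z\wedge[Y,Z]$, which is your first route, and that identity does follow from the graded Leibniz expansion you sketch.

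Your second, direct route is what makes the statement self-contained in the paper's own language of brackets of functionals, and it checks out. Expanding $\{\mathcal A,\{\mathcal B,\mathcal C\}\}$ by the Leibniz rule, the $Y(Y(\cdot))$ and $Z(Z(\cdot))$ terms cancel in the cyclic sum. The mixed terms then collect as $\sum_{\mathrm{cyc}}\bigl(Y(\mathcal A)Z(\mathcal B)-Y(\mathcal B)Z(\mathcal A)\bigr)[Y,Z](\mathcal C)$. This is exactly the $3\times3$ determinant you describe, so the constant is $c=1$, and you can record that rather than leaving it as some $c\ne0$.

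Two small points:
\begin{itemize}
\item With the paper's action \eqref{eq:bivector-action} one has $\{\mathcal A,\mathcal B\}_{L_2}=Z(\mathcal A)Y(\mathcal B)-Y(\mathcal A)Z(\mathcal B)$, which is the negative of your bracket. This is harmless because the Jacobiator is quadratic in the bracket.
\item To pass from ``the determinant vanishes for all $\mathcal A,\mathcal B,\mathcal C$'' to ``the trivector $Y\wedge Z\wedge[Y,Z]$ vanishes,'' say explicitly that at each point the differentials of (e.g.\ linear) functionals exhaust $H'$.
\end{itemize}

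As for what each route buys: the Schouten route is shorter, but it imports the equivalence $[\pi,\pi]=0\Leftrightarrow$ Jacobi together with its sign and factor conventions. The direct route is elementary, fixes the constant, and uses only the definitions and the Lie-bracket convention of \Cref{subsec:notation}.
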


\begin{proposition}[Local isothermal Poisson reconstruction]
	\label{prop:flow-box}
	Let $Z(z_0)\ne0$, and suppose that $\dd F(Z)=0$ and $\dd\calC_a(Z)=0$, $a=1,\ldots,m$, near $z_0$. If $\dd F,\dd\calC_1,\ldots,\dd\calC_m$ are linearly independent at $z_0$, then locally there is a vector field $Y$ with
	\begin{equation*}
		[Y,Z]=0,
		\quad
		\dd F(Y)=1,
		\quad
		\dd\calC_a(Y)=0,\quad a=1,\ldots,m,
	\end{equation*}
	and $L_2=Y\wedge Z$ defines a Poisson structure with
	\begin{equation}
		\label{eq:L-degeneracy}
		L_2\,\grad F=Z,
		\quad
		L_2\,\grad\calC_a=0,\quad a=1,\ldots,m .
	\end{equation}
\end{proposition}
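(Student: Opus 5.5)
The plan is to straighten $Z$ with the flow-box theorem and then construct $Y$ explicitly in the straightening coordinates. Since $Z(z_0)\ne0$, there are local coordinates $(s,y_1,\ldots,y_{n-1})$ near $z_0$ in which $Z=\partial_s$. Because $\dd F(Z)=0$ and $\dd\calC_a(Z)=0$ near $z_0$, the functions $F,\calC_1,\ldots,\calC_m$ are independent of $s$; they are functions of $y$ alone. Their differentials are linearly independent at $z_0$ and have no $\dd s$ component. Therefore $\dd_yF,\dd_y\calC_1,\ldots,\dd_y\calC_m$ are linearly independent covectors on the $y$-slice. In particular $m+1\le n-1$.

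Next I would look for $Y=\sum_j Y^j(y)\,\partial_{y_j}$ with coefficients independent of $s$ and with no $\partial_s$ component. Then $[Y,Z]=[Y,\partial_s]=-\partial_sY=0$ automatically. The remaining conditions form a pointwise linear system in $Y^j(y)$:
\[
\sum_j Y^j\partial_{y_j}F=1,\qquad \sum_j Y^j\partial_{y_j}\calC_a=0 .
\]
Its coefficient matrix has full row rank $m+1$ near $z_0$ by the independence established above. Hence the minimum-norm solution $Y=G^{\top}(GG^{\top})^{-1}e_1$, where $G$ is the Jacobian of $(F,\calC)$ in $y$, depends smoothly on $y$. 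This gives a smooth local $Y$ with the three required properties. Alternatively, one can complete $(F,\calC_1,\ldots,\calC_m)$ to coordinates on the slice and take $Y=\partial_F$.

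Finally, the Poisson property follows from \Cref{prop:rank-two-poisson}, since $[Y,Z]=0$. The degeneracies \eqref{eq:L-degeneracy} come from \eqref{eq:bivector-action}:
\[
L_2\grad F=\dd F(Y)Z-\dd F(Z)Y=Z,\qquad L_2\grad\calC_a=\dd\calC_a(Y)Z-\dd\calC_a(Z)Y=0 .
\]

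The main obstacle is the first step. It requires checking that the independence of the differentials survives in the flow-box coordinates, so that the linear system is solvable with an $s$-independent solution. This is exactly where the hypotheses $\dd F(Z)=\dd\calC_a(Z)=0$ are used: they remove the $\dd s$ components and make all data $s$-invariant.
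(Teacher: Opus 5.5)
Your proposal is correct and is essentially the paper's argument. The paper transports a field $Y_\Sigma$ from a section transverse to $Z$ along the flow $\Psi_t$, which is exactly an $s$-independent field in your flow-box coordinates, and propagates $\dd F(Y)=1$, $\dd\calC_a(Y)=0$ off $\Sigma$ via $Z(Y(\mathcal Q))=Y(Z(\mathcal Q))+[Z,Y](\mathcal Q)=0$; in your coordinates this propagation is automatic from $s$-independence, and your explicit smooth solution $G^{\top}(GG^{\top})^{-1}e_1$ simply makes precise the paper's step ``independence gives a field $Y_\Sigma$''.
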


\begin{proof}
	Let $\Psi_t$ be the local flow of $Z$ and $\Sigma$ a section transverse to $Z$ at $z_0$. Independence gives a field $Y_\Sigma$ on $\Sigma$ with $\dd F(Y_\Sigma)=1$ and $\dd\calC_a(Y_\Sigma)=0$. Extend it by $Y_{\Psi_t(z)}=(\Psi_t)_*Y_\Sigma(z)$, so $[Y,Z]=0$. For any $\mathcal Q$ satisfying $Z(\mathcal Q)=0$,
	\begin{equation*}
		Z(Y(\mathcal Q))=Y(Z(\mathcal Q))+[Z,Y](\mathcal Q)=0,
	\end{equation*}
	so the identities on $\Sigma$ propagate along the flow. The conclusion follows from \Cref{prop:rank-two-poisson} and \eqref{eq:bivector-action}.
\end{proof}

The vectors $\grad\calC_a$ lie in the kernel of the constructed Poisson operator. Taking $Z=L\,\grad F$ gives the isothermal application. This rank-two bracket need not coincide with the physical Poisson bracket.

\section{Examples}
\label{sec:examples}

\subsection{Incompressible Navier--Stokes equations and an isothermal GENERIC discretization}
\label{subsec:ns}

Let $\Om$ be a periodic box and $P_\sigma$ the Helmholtz--Leray projection. The incompressible Navier--Stokes equations are \cite{Rorger-2001}
\begin{equation}
	\label{eq:NS}
	\partial_t\bfu+P_\sigma(\bfu\cdot\grad\bfu)=\nu P_\sigma\Delta\bfu.
\end{equation}
We regard the velocity-only viscous system as an isothermal reduction with the thermal variables omitted. On the divergence-free space, set $F(\bfu)=\tfrac12\norm{\bfu}^2$, $-\tfrac{1}{\theta}M\grad F=\nu P_\sigma\Delta\bfu$, and $J_E(\bfu)=-P_\sigma(\bfu\cdot\grad\bfu)$. Since $(J_E(\bfu),\bfu)=0$, the SGE gauge is $S_*=\tfrac{\bfu\wedge J_E}{\norm{\bfu}^2}$. The Euler part also carries the noncanonical Lie--Poisson bracket \cite{Morrison1982,Morrison1998}
\begin{equation}
	\label{eq:euler-lie-poisson}
	\{\mathcal A,\mathcal B\}_{\rm Eul}(\bfu)
	=-\int_\Om \bfu\cdot
	\Bigl[\tfrac{\delta\mathcal A}{\delta\bfu},\tfrac{\delta\mathcal B}{\delta\bfu}\Bigr]\dd x .
\end{equation}
where $[\cdot,\cdot]$ is the Lie bracket of divergence-free vector fields. The following continuum calculation motivates the MAC construction below.

\begin{proposition}[Formal continuum rank-two gauge]
	\label{prop:ns-poisson}
	On $\{\bfu\ne0\}$, the bivector $L_2^{NS}=Y\wedge J_E$, with $Y(\bfu)=\tfrac{\bfu}{\norm{\bfu}^2}$, formally satisfies the Jacobi identity and $L_2^{NS}\,\grad F=J_E$.
\end{proposition}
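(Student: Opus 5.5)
The plan is to check the two claims separately. The compatibility identity is purely algebraic. The Jacobi identity I would reduce, via the rank-two criterion of \Cref{prop:rank-two-poisson}, to a single Lie-bracket computation that uses only the quadratic homogeneity of $J_E$.

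\emph{Compatibility.} On the divergence-free space, $\grad F(\bfu)=\bfu$. By \eqref{eq:bivector-action},
\[
L_2^{NS}\bfu=\ip{Y}{\bfu}J_E-\ip{J_E}{\bfu}Y .
\]
Here $\ip{Y}{\bfu}=\norm{\bfu}^2/\norm{\bfu}^2=1$, and $\ip{J_E}{\bfu}=0$ is the energy orthogonality of the Euler nonlinearity already noted for the SGE gauge $S_*$. Hence $L_2^{NS}\grad F=J_E$. This is exactly the condition $\dd F(Y)=1$, $\dd F(Z)=0$ singled out before \Cref{prop:rank-two-poisson}.

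\emph{Jacobi identity.} I would invoke \Cref{prop:rank-two-poisson} formally, treating the divergence-free space as if it were the finite-dimensional $H$, and aim to show $[Y,J_E]\in\Span\{Y,J_E\}$. The derivatives are as follows.
\begin{itemize}
\item For $Y(\bfu)=\bfu/\norm{\bfu}^2$,
\[
DY(\bfu)[\bfv]=\tfrac{\bfv}{\norm{\bfu}^2}-\tfrac{2(\bfu,\bfv)}{\norm{\bfu}^4}\,\bfu .
\]
Taking $\bfv=J_E$ and using $(\bfu,J_E)=0$ gives $DY[J_E]=J_E/\norm{\bfu}^2$.
\item $J_E(\bfu)=-P_\sigma(\bfu\cdot\grad\bfu)$ is a homogeneous quadratic map, so
\[
DJ_E(\bfu)[\bfv]=-P_\sigma(\bfv\cdot\grad\bfu+\bfu\cdot\grad\bfv).
\]
Euler's identity for homogeneous maps gives $DJ_E(\bfu)[\bfu]=2J_E(\bfu)$, hence $DJ_E[Y]=2J_E/\norm{\bfu}^2$.
\end{itemize}
With the convention $[X,Z]=DZ[X]-DX[Z]$, this yields
\[
[Y,J_E]=\tfrac{J_E}{\norm{\bfu}^2}\in\Span\{Y,J_E\},
\]
so $Y\wedge J_E\wedge[Y,J_E]=0$ and \eqref{eq:frobenius-condition} holds. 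Where $J_E=0$ the bivector vanishes and the condition is trivial.

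\emph{Main obstacle.} The delicate point is not the algebra but the justification. \Cref{prop:rank-two-poisson} is a finite-dimensional statement, while $J_E$ is an unbounded operator on an infinite-dimensional space. This is why the statement only claims that $L_2^{NS}$ \emph{formally} satisfies the Jacobi identity. I would stress that the computation uses only three facts: skew-orthogonality $(J_E(\bfu),\bfu)=0$, quadratic homogeneity of $J_E$, and the explicit form of $Y$. Consequently it transfers verbatim to any spatial discretization whose discrete nonlinearity is an exactly energy-conserving homogeneous quadratic map. That is precisely the property to be demanded of the MAC discretization that follows, where the criterion then applies rigorously.
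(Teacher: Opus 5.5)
Your proposal is correct and follows the paper's proof essentially line by line. You obtain compatibility from $\dd F(Y)=1$ and $\dd F(J_E)=0$, then $DJ_E[Y]=2J_E/\norm{\bfu}^2$ by quadratic homogeneity and $DY[J_E]=J_E/\norm{\bfu}^2$ by free-energy neutrality, so $[Y,J_E]=J_E/\norm{\bfu}^2$ and \Cref{prop:rank-two-poisson} applies formally, exactly as the paper does (and your closing observation is what the paper uses in \Cref{thm:ns-semidiscrete-poisson}).
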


\begin{proof}
	The identity $L_2^{NS}\grad F=J_E$ follows from $\dd F(Y)=1$, $\dd F(J_E)=0$, and \eqref{eq:bivector-action}. Since $J_E$ is quadratic and free-energy neutral,
	\begin{equation*}
		DJ_E[Y]=\tfrac{2J_E}{\norm{\bfu}^2},
		\quad
		DY[J_E]=\tfrac{J_E}{\norm{\bfu}^2}
		-\tfrac{2(\bfu,J_E)\bfu}{\norm{\bfu}^4}
		=\tfrac{J_E}{\norm{\bfu}^2}.
	\end{equation*}
	Thus $[Y,J_E]=\tfrac{J_E}{\norm{\bfu}^2}$, and \Cref{prop:rank-two-poisson} applies formally.
\end{proof}

This rank-two bracket generates $J_E$ from $F$ but is not the physical bracket \eqref{eq:euler-lie-poisson}. The MAC construction retains the bilinearity and free-energy neutrality used above.

On a uniform two-dimensional MAC grid \cite{Harlow-1965-MAC,Gong-2018-StaggeredCHNS}, store $p$ at cell centers and $(u,v)$ at the corresponding face centers. With the mesh-weighted inner products, define
\begin{equation*}
	\begin{aligned}
		(D_h\bfu)_{i,j}
		 & =\tfrac{u_{i+1/2,j}-u_{i-1/2,j}}{h}
		+\tfrac{v_{i,j+1/2}-v_{i,j-1/2}}{h},   \\
		(G_hp)^x_{i+1/2,j}
		 & =\tfrac{p_{i+1,j}-p_{i,j}}{h},
		\quad
		(G_hp)^y_{i,j+1/2}
		=\tfrac{p_{i,j+1}-p_{i,j}}{h}.
	\end{aligned}
\end{equation*}
Periodic summation by parts gives $G_h=-D_h^{\top}$, equivalently $(D_h\bfv,q)_h=-(\bfv,G_hq)_h$ \cite{ArnoldFalkWinther2006,LipnikovManziniShashkov2014}. Set $V_h=\ker D_h$, let $P_h$ be the orthogonal projection onto $V_h$, and define the componentwise Laplacian by
\begin{equation*}
	\begin{aligned}
		(\Delta_h u)_{i+1/2,j}
		 & =\tfrac{u_{i+3/2,j}-2u_{i+1/2,j}+u_{i-1/2,j}}{h^2}
		+\tfrac{u_{i+1/2,j+1}-2u_{i+1/2,j}+u_{i+1/2,j-1}}{h^2},   \\
		(\Delta_h v)_{i,j+1/2}
		 & =\tfrac{v_{i+1,j+1/2}-2v_{i,j+1/2}+v_{i-1,j+1/2}}{h^2}
		+\tfrac{v_{i,j+3/2}-2v_{i,j+1/2}+v_{i,j-1/2}}{h^2}.
	\end{aligned}
\end{equation*}

For face fields $\bfw$ and $\bm z$, let $N_h(\bfw)\bm z$ and $K_h(\bfw)\bm z$ be the centered advective and conservative MAC approximations, with arithmetic averages placing products on the required faces \cite{Gong-2018-StaggeredCHNS}. Define
\begin{equation}
	\label{eq:Ch-def}
	C_h(\bfw)\bm z
	=\tfrac12\bigl(N_h(\bfw)\bm z+K_h(\bfw)\bm z\bigr)
	\approx\tfrac12\bigl((\bfw\cdot\grad)\bm z+\diver(\bm z\otimes\bfw)\bigr).
\end{equation}
Periodic summation by parts gives $K_h(\bfw)=-N_h(\bfw)^{\top}$, hence $C_h(\bfw)=\tfrac12\bigl(N_h(\bfw)-N_h(\bfw)^{\top}\bigr)$ and $C_h(\bfw)^{\top}=-C_h(\bfw)$. If $D_h\bfw=0$, \eqref{eq:Ch-def} is second-order consistent with $(\bfw\cdot\grad)\bm z$. The semi-discrete scheme is
\begin{equation}
	\label{eq:NS-semidiscrete}
	\dot\bfu=\nu P_h\Delta_h\bfu+J_h(\bfu)
	\quad\text{on }V_h,
\end{equation}
where $J_h(\bfu)=-P_hC_h(\bfu)\bfu$ and $F_h(\bfu)=\tfrac12\norm{\bfu}_h^2$. The operator $\Delta_h$ is symmetric negative semidefinite, and $\norm{\grad_h\bfv}_h^2:=-(\Delta_h\bfv,\bfv)_h$. Skewness and linearity of $C_h$ give
\begin{equation}
	\label{eq:Jh-properties}
	(J_h(\bfu),\bfu)_h=-(C_h(\bfu)\bfu,\bfu)_h=0,
	\quad
	J_h(\lambda\bfu)=\lambda^2J_h(\bfu).
\end{equation}
\begin{theorem}[Semi-discrete rank-two structure]
	\label{thm:ns-semidiscrete-poisson}
	On $V_h\setminus\{0\}$, the bivector $L_2^h=Y_h\wedge J_h$, with $Y_h(\bfu)=\tfrac{\bfu}{\norm{\bfu}_h^2}$, is Poisson and satisfies $L_2^h\,\grad F_h=J_h$.
\end{theorem}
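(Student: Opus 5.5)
The plan is to mirror the formal continuum argument of \Cref{prop:ns-poisson}, now on the finite-dimensional space $V_h$ with the mesh inner product. There \Cref{prop:rank-two-poisson} applies rigorously, since $V_h$ is a genuine finite-dimensional inner-product space and $V_h\setminus\{0\}$ is open in it. First I will check that $Y_h$ and $J_h$ are smooth vector fields on $V_h\setminus\{0\}$ taking values in $V_h$. For $Y_h=\bfu/\norm{\bfu}_h^2$ this holds because $\bfu\in V_h$. For $J_h$ it holds because of the projection $P_h$, and $J_h$ is a quadratic polynomial in $\bfu$. On $V_h$ we have $\grad F_h(\bfu)=\bfu$, because $F_h=\tfrac12\norm{\bfu}_h^2$ and the gradient is taken in the induced inner product on $V_h$.

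The reversible representation then follows directly from \eqref{eq:bivector-action}:
\[
L_2^h\grad F_h=(\bfu,\bfu)_h\,\tfrac{J_h}{\norm{\bfu}_h^2}-(J_h,\bfu)_h\,Y_h=J_h .
\]
Here the first term uses $\dd F_h(Y_h)=1$, and the second term vanishes because of the neutrality identity in \eqref{eq:Jh-properties}.

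For the Jacobi identity I will compute the Lie bracket $[Y_h,J_h]=DJ_h[Y_h]-DY_h[J_h]$.
\begin{itemize}[label=$\cdot$]
\item Since $J_h$ is homogeneous of degree two by \eqref{eq:Jh-properties}, Euler's identity gives $DJ_h(\bfu)[\bfu]=2J_h(\bfu)$. Hence $DJ_h[Y_h]=2J_h/\norm{\bfu}_h^2$.
\item Differentiating $Y_h$ gives
\[
DY_h[\bm z]=\tfrac{\bm z}{\norm{\bfu}_h^2}-\tfrac{2(\bfu,\bm z)_h\bfu}{\norm{\bfu}_h^4}.
\]
With $\bm z=J_h$ the second term drops by neutrality, so $DY_h[J_h]=J_h/\norm{\bfu}_h^2$.
\end{itemize}
Therefore $[Y_h,J_h]=J_h/\norm{\bfu}_h^2\in\Span\{Y_h,J_h\}$. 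It follows that $Y_h\wedge J_h\wedge[Y_h,J_h]=0$, and \Cref{prop:rank-two-poisson} yields the Poisson property.

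The only point needing care is justifying the homogeneity and neutrality identities on the constrained space. Quadratic homogeneity requires $C_h(\bfw)\bm z$ to be bilinear in $(\bfw,\bm z)$, which holds because the averaged centered MAC stencils are linear in each argument. Neutrality requires $(P_hC_h(\bfu)\bfu,\bfu)_h=(C_h(\bfu)\bfu,P_h\bfu)_h=(C_h(\bfu)\bfu,\bfu)_h$, using that $P_h$ is self-adjoint and that $P_h\bfu=\bfu$ on $V_h$. This last quantity is zero by the skewness $C_h^\top=-C_h$ established from summation by parts.

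I also need to treat the ambient space correctly. The Jacobi criterion must be applied with $V_h$ as the state space, so that all derivatives are taken along $V_h$. This is consistent with the paper's convention of identifying constrained spaces with their linear span. At points where $J_h=0$ the criterion holds trivially, so no case split is needed.
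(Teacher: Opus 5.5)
Your proposal is correct and follows the paper's proof essentially step by step. You obtain $L_2^h\grad F_h=J_h$ from $\dd F_h(Y_h)=1$ and neutrality, compute $[Y_h,J_h]=J_h/\norm{\bfu}_h^2$ via quadratic homogeneity and neutrality, and invoke \Cref{prop:rank-two-poisson}. Your extra justifications (bilinearity of $C_h$, self-adjointness of $P_h$ with $P_h\bfu=\bfu$ on $V_h$, and taking $V_h$ as the state space) only make explicit what the paper takes directly from \eqref{eq:Jh-properties}.
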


\begin{proof}
	By \eqref{eq:Jh-properties}, $\dd F_h(Y_h)=1$ and $\dd F_h(J_h)=0$, so \eqref{eq:bivector-action} gives $L_2^h\,\grad F_h=J_h$. Quadratic homogeneity of $J_h$ and $(\bfu,J_h)_h=0$ give
	\begin{equation*}
		DJ_h[Y_h]=\tfrac{2J_h}{\norm{\bfu}_h^2},
		\quad
		DY_h[J_h]
		=\tfrac{J_h}{\norm{\bfu}_h^2}-\tfrac{2(\bfu,J_h)_h\,\bfu}{\norm{\bfu}_h^4}
		=\tfrac{J_h}{\norm{\bfu}_h^2}.
	\end{equation*}
	Hence $[Y_h,J_h]=\tfrac{J_h}{\norm{\bfu}_h^2}$, so \Cref{prop:rank-two-poisson} applies.
\end{proof}

With $\bfu^{n+1/2}=\tfrac12(\bfu^n+\bfu^{n+1})$, the implicit midpoint discretization of \eqref{eq:NS-semidiscrete} is
\begin{equation}
	\label{eq:NS-midpoint}
	\tfrac{\bfu^{n+1}-\bfu^n}{\tau}
	=\nu P_h\Delta_h\bfu^{n+1/2}+J_h(\bfu^{n+1/2}) .
\end{equation}
\begin{theorem}
	\label{thm:ns-fully-discrete}
	At a nonzero midpoint, $J_h=L_2^h\,\grad F_h$ with $L_2^h$ Poisson; at a zero midpoint, set $L_2^h=0$, which also gives $J_h=0$. Hence \eqref{eq:NS-midpoint} is a fully discrete isothermal GENERIC discretization and satisfies the free-energy law
	\begin{equation}
		\label{eq:NS-discrete-energy}
		F_h(\bfu^{n+1})-F_h(\bfu^n)
		=-\nu\tau\norm{\grad_h\bfu^{n+1/2}}_h^2 .
	\end{equation}
\end{theorem}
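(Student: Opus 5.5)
The argument splits into a structural part and an energy identity. For the structural part, fix a time step and evaluate everything at the midpoint $\bfu^{n+1/2}$. If $\bfu^{n+1/2}\ne0$, then \Cref{thm:ns-semidiscrete-poisson} applied at $\bfu=\bfu^{n+1/2}\in V_h\setminus\{0\}$ gives $J_h(\bfu^{n+1/2})=L_2^h(\bfu^{n+1/2})\grad F_h(\bfu^{n+1/2})$, with $L_2^h=Y_h\wedge J_h$ Poisson on $V_h\setminus\{0\}$. If $\bfu^{n+1/2}=0$, the homogeneity in \eqref{eq:Jh-properties} with $\lambda=0$ gives $J_h(0)=0$, so the convention $L_2^h=0$ is consistent. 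The zero operator is trivially skew and satisfies the Jacobi identity, although it is not the continuous extension of $Y_h\wedge J_h$; this is the only case needing care, and the statement already adopts the convention. The dissipative part $-\tfrac1\theta M\grad F_h:=\nu P_h\Delta_h\bfu$ has $M_h=-\theta\nu P_h\Delta_h P_h$ on $V_h$. This operator is symmetric because $P_h$ is an orthogonal projection and $\Delta_h$ is symmetric, and it is positive semidefinite because $\Delta_h$ is negative semidefinite. Since $\grad F_h(\bfu)=\bfu$ on $V_h$, \eqref{eq:NS-midpoint} reads
\[
\tfrac{\bfu^{n+1}-\bfu^n}{\tau}=\bigl(L_2^h-\tfrac1\theta M_h\bigr)\grad F_h(\bfu^{n+1/2}),
\]
which is the isothermal GENERIC form \eqref{eq:isothermal-reduction} in the operator sense. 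Here the midpoint gradient $\bfu^{n+1/2}$ is exactly the discrete gradient of the quadratic $F_h$.

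For the energy law, take the $(\cdot,\cdot)_h$ inner product of \eqref{eq:NS-midpoint} with $\tau\bfu^{n+1/2}$. On the left, the quadratic identity $(\bfu^{n+1}-\bfu^n,\tfrac12(\bfu^{n+1}+\bfu^n))_h=F_h(\bfu^{n+1})-F_h(\bfu^n)$ gives the free-energy difference. On the right, the reversible term vanishes by \eqref{eq:Jh-properties}, i.e.\ $(J_h(\bfu^{n+1/2}),\bfu^{n+1/2})_h=0$; equivalently, skewness of $L_2^h$ gives $(\grad F_h,L_2^h\grad F_h)_h=0$. For the viscous term we must first check that $\bfu^{n+1/2}\in V_h$. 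This holds because $\bfu^n\in V_h$ by induction and the right-hand side of \eqref{eq:NS-midpoint} lies in $V_h$ owing to the projection $P_h$. Then self-adjointness of $P_h$ together with $P_h\bfu^{n+1/2}=\bfu^{n+1/2}$ gives
\[
(P_h\Delta_h\bfu^{n+1/2},\bfu^{n+1/2})_h=(\Delta_h\bfu^{n+1/2},\bfu^{n+1/2})_h=-\norm{\grad_h\bfu^{n+1/2}}_h^2,
\]
by the definition of $\norm{\grad_h\cdot}_h$. Multiplying by $\nu\tau$ yields \eqref{eq:NS-discrete-energy}.

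The main point to state carefully is that "fully discrete GENERIC" is meant in the operator sense: the midpoint step is generated by a Poisson operator plus a symmetric semidefinite operator acting on the single force $\grad F_h(\bfu^{n+1/2})$. Solvability of the implicit step is not claimed, so every identity is asserted for any solution $\bfu^{n+1}\in V_h$.
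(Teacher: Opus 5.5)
Your proposal is correct and follows the paper's route. The structural claim is read off from \Cref{thm:ns-semidiscrete-poisson} at the midpoint, with $J_h(0)=0$ in the degenerate case, and the energy law comes from pairing \eqref{eq:NS-midpoint} with $\tau\bfu^{n+1/2}$ using quadraticity of $F_h$, $(J_h(\bfu^{n+1/2}),\bfu^{n+1/2})_h=0$ and $P_h\bfu^{n+1/2}=\bfu^{n+1/2}$. You also spell out details the paper leaves implicit, namely the invariance of $V_h$ and the explicit $M_h$.

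One aside is inaccurate, though it does not affect your argument. Since $\norm{Y_h(\bfu)}_h=\norm{\bfu}_h^{-1}$ and $\norm{J_h(\bfu)}_h\le C\norm{\bfu}_h^2$, one has $Y_h\wedge J_h=O(\norm{\bfu}_h)$. So the zero bivector \emph{is} its continuous extension to $\bfu=0$. What fails there is differentiability, because the extension is a degree-one homogeneous, generally nonlinear map, and that is why the convention is still needed.
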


\begin{proof}
	Pair \eqref{eq:NS-midpoint} with $\tau\,\bfu^{n+1/2}$; quadraticity of $F_h$ and \eqref{eq:Jh-properties} yield \eqref{eq:NS-discrete-energy}.
\end{proof}

\subsection{Cahn--Hilliard--Navier--Stokes equations}
\label{subsec:chns}

As a second isothermal free-energy example, consider on a periodic domain $\Om$ the CHNS system \cite{Kay-2007-CHNS,Gong-2018-CHNS,HanWang2015,ShenYang2010,ShenYang2015,Gong-2018-StaggeredCHNS,Chen-2020-CHNS,Zhao-2021-CHNS,Yang-2019-JCP,Yang-2025-JCP}
\begin{equation}
	\label{eq:chns-system}
	\left\{
	\begin{aligned}
		 & \partial_t\bfu+P_\sigma (\bfu\cdot\grad)\bfu+P_\sigma \phi\grad\bar\mu=\nu P_\sigma\Delta\bfu, \\
		 & \partial_t\phi+\diver(\phi\bfu)=m\Delta\bar\mu,                                                \\
		 & \mu=-\gamma\varepsilon\Delta\phi+\tfrac{\gamma}{\varepsilon}f'(\phi),                          \\
		 & \bar\mu=\mu-\tfrac{1}{\abs{\Om}}\int_\Om\mu\,\dd x,                                            \\
		 & f(\phi) = \tfrac{1}{4}(\phi^2 - 1)^2.
	\end{aligned}
	\right.
\end{equation}
with mobility $m>0$ and free energy
\begin{equation*}
	F(\bfu,\phi)=\tfrac12\norm{\bfu}^2
	+\tfrac{\gamma\varepsilon}{2}\norm{\grad\phi}^2
	+\tfrac{\gamma}{\varepsilon}(f(\phi),1).
\end{equation*}
On the divergence-free, fixed-mass affine phase space, the reversible field
\begin{equation*}
	J(\Phi)=-\bigl(P_\sigma(\bfu\cdot\grad)\bfu+P_\sigma\phi\grad\bar\mu,\ \diver(\phi\bfu)\bigr)
\end{equation*}
is free-energy neutral because $\grad F=(\bfu,\bar\mu)$ on this affine phase space and periodic integration by parts gives $\ip{\grad F}{J}=0$.

Set $D_2a^{n+1}:=\tfrac{3a^{n+1}-4a^n+a^{n-1}}{2\tau}$ and $\hat a^{n+1}:=2a^n-a^{n-1}$ for $a=\bfu,\phi$. In continuous spatial notation, with a compatible periodic summation-by-parts discretization understood, the GSGE--BDF2 scheme is
\begin{equation}
	\label{eq:chns-scheme}
	\left\{
	\begin{aligned}
		 & D_2\bfu^{n+1}+\grad p^{n+1}=\nu\Delta\bfu^{n+1}-\mathcal R_{\bfu}^{n+1},                                                 \\
		 & \diver\bfu^{n+1}=0,                                                                                                      \\
		 & D_2\phi^{n+1}=m\Delta\bar\mu^{n+1}-\mathcal R_{\phi}^{n+1},                                                           \\
		 & \mu^{n+1}=-\gamma\varepsilon\Delta\phi^{n+1}
		+\tfrac{\gamma}{\varepsilon} [\chi(\tfrac{3 \phi^{n+1} - \phi^n}{2}, \tfrac{3 \phi^n - \phi^{n-1}}{2}) - \hat{\phi}^{n+1}], \\
		 & \chi (a, b) := \tfrac{1}{4}(a^2 + b^2)(a + b),
	\end{aligned}
	\right.
\end{equation}
The ZEC residual uses the regularized gauge \eqref{eq:regularized-solution}, with $\sigma=\ell_3\tau^2$ and the differential weight specified below:
\begin{equation*}
	\bigl(\mathcal R_{\bfu}^{n+1},\mathcal R_{\phi}^{n+1}\bigr)
	=\lambda_A^{n+1}\,\hat{J}^{n+1}
	-\lambda_J^{n+1}\,A_{\ell_1,\ell_2} \hat q^{n+1},
\end{equation*}
where
\begin{equation*}
	\begin{aligned}
		\hat{J}^{n+1}
		               & =\bigl((\hat{\bfu}^{n+1}\!\cdot\!\grad)\hat{\bfu}^{n+1}+\hat{\phi}^{n+1}\grad\hat\mu^{n+1},\ \diver(\hat\phi^{n+1}\hat\bfu^{n+1})\bigr), \\
		\hat q^{n+1}
		               & =(\hat{\bfu}^{n+1},\bar{\hat\mu}^{n+1}),                                                                                                         \\
		A_{\ell_1,\ell_2} \hat q^{n+1}
		               & =\bigl(\ell_1^2\hat{\bfu}^{n+1}-\ell_2^2\Delta\hat{\bfu}^{n+1},\ \ell_1^2\bar{\hat\mu}^{n+1}-\ell_2^2\Delta\hat\mu^{n+1}\bigr),                                \\
		\hat \mu^{n+1} & = -\gamma \varepsilon \Delta \hat\phi^{n+1} + \tfrac{\gamma}{\varepsilon} \left((\hat{\phi}^{n+1})^3 - \hat{\phi}^{n+1}\right).
	\end{aligned}
\end{equation*}
Let $e^{n+1}=(\bfu^{n+1},\bar\mu^{n+1})$ and
\begin{equation}
	\label{eq:chns-denominator}
	\begin{aligned}
		d^{\,n+1}
		 & =\ip{A_{\ell_1,\ell_2}\hat q^{n+1}}{\hat q^{n+1}}+\ell_3\tau^2 \\
		 & =\ell_1^2\norm{\hat\bfu^{n+1}}^2+\ell_1^2\norm{\bar{\hat\mu}^{n+1}}^2
		+\ell_2^2\norm{\grad \hat\bfu^{n+1}}^2 \\
		 & \quad+\ell_2^2\norm{\grad\hat\mu^{n+1}}^2+\ell_3\tau^2.
	\end{aligned}
\end{equation}
Assume $d^{\,n+1}>0$ and set
\begin{equation}
	\label{eq:chns-lambdas}
	\lambda_A^{n+1}=\tfrac{\ip{A_{\ell_1,\ell_2}\hat q^{n+1}}{e^{n+1}}}{d^{\,n+1}},
	\quad
	\lambda_J^{n+1}=\tfrac{\ip{\hat J^{n+1}}{e^{n+1}}}{d^{\,n+1}} .
\end{equation}
\begin{remark}
	Let $\ell_1,\ell_2,\ell_3\ge0$ with $\ell_1^2+\ell_2^2>0$. If $\ell_3>0$, then $d^{\,n+1}>0$ even when the extrapolated force vanishes. Where the unregularized denominator is bounded away from zero, $\ell_3\tau^2=O(\tau^2)$ retains second-order consistency by \Cref{rem:regularized-gauge}. If $\ell_2=\ell_3=0$, the factors $\ell_1^2$ cancel and the closure reduces to the SGE--SBDF2 method of \cite{GuWangSGE2025}. If $\ell_1=\ell_3=0$, the phase component of $A_{\ell_1,\ell_2}\hat q^{n+1}$ is $-\ell_2^2\Delta\hat\mu^{n+1}$ and has zero mean without projecting $\hat\mu^{n+1}$; the displayed skew formula therefore gives a natural mass-preserving gauge whenever $d^{\,n+1}>0$, although the differential weight is semidefinite in this limit. The rank-two GSGE closure weakly couples the Navier--Stokes and phase-field subproblems only through the two scalar coefficients $\lambda_A^{n+1}$ and $\lambda_J^{n+1}$. Hence \eqref{eq:chns-scheme} admits the decoupled implementation of \cite{GuWangSGE2025}. Since the efficiency of the SGE framework and the BDF2 discretization has already been demonstrated in \cite{GuWangSGE2025,GuWangCSBDF22026}, we do not present additional numerical results here.
\end{remark}

\begin{proposition}
	\label{prop:chns-structure}
	The scheme \eqref{eq:chns-scheme}--\eqref{eq:chns-lambdas} satisfies
	\begin{equation*}
		(\mathcal R_\phi^{n+1},1)=0,
		\quad
		(\mathcal R_{\bfu}^{n+1},\bfu^{n+1})+(\mathcal R_{\phi}^{n+1},\bar\mu^{n+1})=0,
	\end{equation*}
	and $(\phi^{n+1},1)=(\phi^0,1)$ for all $n$ if $(\phi^1,1)=(\phi^0,1)$.
\end{proposition}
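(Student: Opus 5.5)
Three claims need checking: zero mean of $\mathcal R_\phi^{n+1}$, skew cancellation against $e^{n+1}$, and mass conservation. The second is purely algebraic.

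\textbf{Skew cancellation.} We pair the residual with $e^{n+1}=(\bfu^{n+1},\bar\mu^{n+1})$ in the product inner product. By definition,
\begin{equation*}
	\ip{(\mathcal R_{\bfu}^{n+1},\mathcal R_\phi^{n+1})}{e^{n+1}}
	=\lambda_A^{n+1}\ip{\hat J^{n+1}}{e^{n+1}}-\lambda_J^{n+1}\ip{A_{\ell_1,\ell_2}\hat q^{n+1}}{e^{n+1}}.
\end{equation*}
Inserting \eqref{eq:chns-lambdas} turns this into $\bigl(\ip{A\hat q}{e}\ip{\hat J}{e}-\ip{\hat J}{e}\ip{A\hat q}{e}\bigr)/d^{\,n+1}=0$. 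This is exactly the statement that the residual is the contraction $(\iota_{e}\omega)^\sharp$ of the regularized wedge form $\omega=(A\hat q)^\flat\wedge \hat J^\flat/d$ from \eqref{eq:regularized-solution}. Skewness then gives $\omega(e,e)=0$, up to the overall sign convention. Only $d^{\,n+1}>0$ is used here, and that is assumed.

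\textbf{Zero mean of $\mathcal R_\phi^{n+1}$.} It suffices to show that both phase components have zero mean.

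For $\hat J$, the phase component is $\diver(\hat\phi^{n+1}\hat\bfu^{n+1})$. Its mean vanishes by periodic summation by parts against the constant $1$, since the discrete gradient of a constant is zero ($(D_h\bm v,1)_h=-(\bm v,G_h1)_h=0$ in the compatible discretization).

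For $A_{\ell_1,\ell_2}\hat q$, the phase component is $\ell_1^2\bar{\hat\mu}^{n+1}-\ell_2^2\Delta\hat\mu^{n+1}$. The first term has zero mean because $\bar{\hat\mu}$ is the mean-free projection. The second has zero mean because $(\Delta \hat\mu,1)=(\hat\mu,\Delta 1)=0$ by symmetry of the periodic Laplacian.

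Hence $(\mathcal R_\phi^{n+1},1)=\lambda_A^{n+1}\cdot0-\lambda_J^{n+1}\cdot 0=0$.

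\textbf{Mass conservation.} We integrate the phase equation of \eqref{eq:chns-scheme} against $1$. The term $(m\Delta\bar\mu^{n+1},1)$ vanishes by the same summation by parts, and the residual term vanishes by the previous step. This gives
\begin{equation*}
	3(\phi^{n+1},1)-4(\phi^n,1)+(\phi^{n-1},1)=0.
\end{equation*}
This is a linear recurrence for $m_n=(\phi^n,1)$ with characteristic roots $1$ and $1/3$. If $m_1=m_0$, induction gives $m_{n+1}=(4m_0-m_0)/3=m_0$ for all $n$.

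\textbf{Main obstacle.} The only delicate point is ensuring that the discrete operators actually realize the summation-by-parts identities used above: $\Delta_h$ symmetric with $\Delta_h1=0$, $D_h$ annihilating constants under pairing, and the mean-free projection commuting correctly. The statement says a compatible periodic discretization is understood, so I will cite those identities rather than reprove them.
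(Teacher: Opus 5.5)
Your proposal is correct and follows the same route as the paper. It uses periodicity to get zero mean for $\diver(\hat\phi^{n+1}\hat\bfu^{n+1})$ and for $\ell_1^2\bar{\hat\mu}^{n+1}-\ell_2^2\Delta\hat\mu^{n+1}$, tests the phase equation with $1$, and shows the direct cancellation $\lambda_A^{n+1}d^{\,n+1}\lambda_J^{n+1}-\lambda_J^{n+1}d^{\,n+1}\lambda_A^{n+1}=0$. Your explicit BDF2 recurrence $3m_{n+1}-4m_n+m_{n-1}=0$ with $m_1=m_0$ just spells out the induction the paper leaves implicit.
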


\begin{proof}
	Periodicity gives $(\diver(\hat\phi^{n+1}\hat\bfu^{n+1}),1)=0$ and $(\ell_1^2\bar{\hat\mu}^{n+1}-\ell_2^2\Delta\hat\mu^{n+1},1)=0$, hence $(\mathcal R_\phi^{n+1},1)=0$. Testing the phase equation with $1$ gives mass conservation. Moreover,
	\begin{equation*}
		\begin{aligned}
			(\mathcal R_{\bfu}^{n+1},\bfu^{n+1})
			+(\mathcal R_{\phi}^{n+1},\bar\mu^{n+1}) & =\ip{\lambda_A^{n+1}\hat J^{n+1}
					                                            -\lambda_J^{n+1}A_{\ell_1,\ell_2} \hat q^{n+1}}{e^{n+1}} \\
					                                         & =\lambda_A^{n+1}d^{\,n+1}\lambda_J^{n+1}
			-\lambda_J^{n+1}d^{\,n+1}\lambda_A^{n+1}=0 .
		\end{aligned}
	\end{equation*}
\end{proof}
\begin{theorem}
	\label{thm:chns-stability}
	Under the compatible spatial discretization and the condition $d^{\,n+1}>0$, the isothermal GSGE--BDF2 scheme satisfies
	\begin{equation*}
		\begin{aligned}
			 & \widetilde F^{\,n+1} - \widetilde F^{\,n} + \tau m \norm{\grad \mu^{n+1}}^2 + \nu \tau \norm{\grad \bfu^{n+1}}^2                      \\
			 & \quad + \tfrac{1}{4} \norm{\bfu^{n+1} - 2 \bfu^n + \bfu^{n-1}}^2 + \tfrac{\gamma \varepsilon}{4} \norm{\grad (\phi^{n+1} - 2\phi^n + \phi^{n-1})}^2 \\
			 & \quad + \tfrac{3 \gamma}{4 \varepsilon} \norm{\phi^{n+1} - 2\phi^n + \phi^{n-1}}^2 = 0.
		\end{aligned}
	\end{equation*}
	Here
	\begin{equation*}
		\begin{aligned}
			\widetilde F^{\,n+1}
			 & =\tfrac14\norm{\bfu^{n+1}}^2+\tfrac14\norm{2\bfu^{n+1}-\bfu^n}^2
			+\tfrac{\gamma\varepsilon}{4}\norm{\grad\phi^{n+1}}^2
			+\tfrac{\gamma\varepsilon}{4}\norm{\grad(2\phi^{n+1}-\phi^n)}^2     \\
			 & \quad+\tfrac{\gamma}{\varepsilon}\bigl(f(\tfrac{3\phi^{n+1} - \phi^n}{2}),1\bigr)
			+ \tfrac{3 \gamma}{8 \varepsilon} \norm{\phi^{n+1} - \phi^n}^2.
		\end{aligned}
	\end{equation*}
\end{theorem}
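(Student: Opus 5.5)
The plan is the standard BDF2 energy argument. The new ingredient is the exact cancellation of the ZEC residual from \Cref{prop:chns-structure}. Throughout, I work with the compatible periodic summation-by-parts discretization, so every integration by parts below is exact at the discrete level.

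\emph{Testing.} I will take the $L^2$ inner product of the velocity equation in \eqref{eq:chns-scheme} with $2\tau\bfu^{n+1}$. The pressure term then drops out, since $(\grad p^{n+1},\bfu^{n+1})=-(p^{n+1},\diver\bfu^{n+1})=0$. Next I test the phase equation with $2\tau\bar\mu^{n+1}$ and the chemical-potential equation with $-2\tau D_2\phi^{n+1}$, and add the three results. The residual contributions combine into $(\mathcal R_{\bfu}^{n+1},\bfu^{n+1})+(\mathcal R_{\phi}^{n+1},\bar\mu^{n+1})$, which vanishes by \Cref{prop:chns-structure}. The viscous and mobility terms give $2\nu\tau\norm{\grad\bfu^{n+1}}^2$ and $2m\tau\norm{\grad\mu^{n+1}}^2$, using $\grad\bar\mu=\grad\mu$. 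To identify $(D_2\phi^{n+1},\bar\mu^{n+1})$ with $(D_2\phi^{n+1},\mu^{n+1})$, I use that $D_2\phi^{n+1}$ has zero mean, which holds by the mass conservation in \Cref{prop:chns-structure}. The final identity is then obtained by dividing the sum by $2$.

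\emph{Linear terms.} For $a=\bfu$ and for $a=\grad\phi$ I apply the BDF2 polarization identity
\begin{equation*}
2(3a^{n+1}-4a^n+a^{n-1},a^{n+1})=\norm{a^{n+1}}^2+\norm{2a^{n+1}-a^n}^2-\norm{a^n}^2-\norm{2a^n-a^{n-1}}^2+\norm{a^{n+1}-2a^n+a^{n-1}}^2 .
\end{equation*}
This produces the $\tfrac14$- and $\tfrac{\gamma\varepsilon}{4}$-weighted parts of $\widetilde F$ together with the corresponding second-difference terms. For the gradient part I use $(-\Delta\phi^{n+1},D_2\phi^{n+1})=(\grad\phi^{n+1},\grad D_2\phi^{n+1})$.

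\emph{Double-well terms.} Set $a=\tfrac{3\phi^{n+1}-\phi^n}{2}$ and $b=\tfrac{3\phi^n-\phi^{n-1}}{2}$. Then $a-b=\tau D_2\phi^{n+1}$, so
\begin{equation*}
\chi(a,b)(a-b)=\tfrac14(a^2+b^2)(a^2-b^2)=\tfrac14(a^4-b^4).
\end{equation*}
Since $f(s)=\tfrac14 s^4-\tfrac12 s^2+\tfrac14$, it remains to show that the explicit term $-(\hat\phi^{n+1},a-b)$ equals $-\tfrac12(\norm{a}^2-\norm{b}^2)$ plus a telescoping quantity and a nonnegative remainder. This is an algebraic identity in the increments $\delta^{n+1}=\phi^{n+1}-\phi^n$ and $\delta^n=\phi^n-\phi^{n-1}$, and it is the step I expect to be the main obstacle. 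The relevant relations are $a=\hat\phi^{n+1}+\tfrac32\delta^{n+1}-\delta^n$ and $b=\hat\phi^{n+1}-\tfrac12\delta^n$. I would expand $\tfrac12(\norm{a}^2-\norm{b}^2)-(\hat\phi^{n+1},a-b)$ as a quadratic form in $\delta^{n+1}$ and $\delta^n$. The goal is to match it to $\tfrac38(\norm{\delta^{n+1}}^2-\norm{\delta^n}^2)+\tfrac34\norm{\delta^{n+1}-\delta^n}^2$, which yields the $\tfrac{3\gamma}{8\varepsilon}$ term in $\widetilde F$ and the $\tfrac{3\gamma}{4\varepsilon}$ second-difference dissipation after multiplying by $\tfrac{\gamma}{\varepsilon}$ and dividing by $2$. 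I would do this by comparing the coefficients of $\norm{\delta^{n+1}}^2$, $(\delta^{n+1},\delta^n)$ and $\norm{\delta^n}^2$. If the coefficients turn out different, the definition of $\widetilde F$ fixes which weights must be used.
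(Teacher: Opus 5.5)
Your proposal is correct and takes the paper's route: the same test functions (the sign on the potential test is immaterial), the BDF2 polarization identity, the zero mean of $D_2\phi^{n+1}$, and the residual cancellation from \Cref{prop:chns-structure}, and it writes out the double-well algebra that the paper defers to its BDF2 reference. The coefficient check you flagged as the main obstacle does close: $\tfrac12(\norm{a}^2-\norm{b}^2)-(\hat\phi^{n+1},a-b)=\tfrac12\bigl(\tfrac32\delta^{n+1}-\tfrac12\delta^n,\ \tfrac32\delta^{n+1}-\tfrac32\delta^n\bigr)=\tfrac98\norm{\delta^{n+1}}^2-\tfrac32(\delta^{n+1},\delta^n)+\tfrac38\norm{\delta^n}^2$, which is exactly $\tfrac38(\norm{\delta^{n+1}}^2-\norm{\delta^n}^2)+\tfrac34\norm{\delta^{n+1}-\delta^n}^2$.
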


\begin{proof}
	Test the momentum, phase, and potential equations with $2\tau\bfu^{n+1}$, $2\tau\bar\mu^{n+1}$, and $2\tau D_2\phi^{n+1}$, respectively. The pressure term vanishes, and the time differences satisfy
	\begin{equation*}
		\begin{aligned}
			2\tau(D_2a^{n+1},a^{n+1})
			 & =\tfrac12\bigl[\norm{a^{n+1}}^2+
					            \norm{2a^{n+1}-a^n}^2\bigr]        \\
			 & \quad-\tfrac12\bigl[\norm{a^{n}}^2+
					                 \norm{2a^{n}-a^{n-1}}^2\bigr]
			+\tfrac12\norm{a^{n+1}-2a^n+a^{n-1}}^2.
		\end{aligned}
	\end{equation*}
	Mass conservation gives $(D_2\phi^{n+1},\bar\mu^{n+1})=(D_2\phi^{n+1},\mu^{n+1})$, while \Cref{prop:chns-structure} cancels the reversible terms:
	\begin{equation*}
		2\tau\bigl[(\mathcal R_{\bfu}^{n+1},\bfu^{n+1})
			+(\mathcal R_{\phi}^{n+1},\bar\mu^{n+1})\bigr]=0.
	\end{equation*}
	The remaining terms give the stated identity as in \cite{GuWangCSBDF22026}.
\end{proof}

\section{Conclusion}
\label{sec:conclusion}

GSGE represents the reversible action $L\,\grad E$ by contracting a two-form with $\grad S$, thereby writing full GENERIC as an entropy-driven generalized gradient flow. The admissible gauges form an affine family and retain the entropy-production law. Weighted least squares selects a unique representative, regularization removes the singular denominator at vanishing force profiles, and a projection-based gauge additionally preserves total energy and other prescribed invariants. For the isothermal reduction, the force is $\grad F$, and the rank-two Jacobi criterion determines when the reversible operator is Poisson.

For the incompressible Navier--Stokes equations, the MAC discretization satisfies the discrete rank-two Jacobi criterion, and the implicit midpoint rule yields a fully discrete isothermal GENERIC scheme with an exact free-energy law. For CHNS, the regularized GSGE--BDF2 scheme preserves mass, dissipates the discrete free energy unconditionally, and decouples through two scalar coefficients. Its parameter limits recover the SGE gauge and a gradient-weighted mass-preserving gauge.

\section*{Data availability}
No data were generated in this work.

\section*{Declaration of competing interest}
The authors declare that they have no known competing financial interests or personal relationships that could have appeared to influence the work reported in this paper.

\end{document}